\long\def\beginpgfgraphicnamed#1#2\endpgfgraphicnamed{\includegraphics{#1}}
\theoremstyle{plain} 
\newtheorem{theo}{Theorem}
\newtheorem{lemm}{Lemma} 
\newtheorem{prop}{Proposition}
\newtheorem{coro}{Corollary}
\newcommand{\n}{\mathbb{N}}
\renewcommand{\r}{\mathbb{R}}
\newenvironment{myitemize}%
{\begin{list}{\textendash}{%
      \settowidth\labelwidth{\textendash}%
      \setlength\itemindent{-\leftmargin+\labelwidth+\labelsep}}}%
{\end{list}}
\begin{document}

\title{Strictly chained $(p,q)$-ary partitions} 

\author{Laurent Imbert}
\address{CNRS, LIRMM, Université Montpellier 2\\
  161 rue Ada\\
  F-34095 Montpellier, cedex\\
  France}
\email{Laurent.Imbert@lirmm.fr}

\author{Fabrice Philippe}
\address{CNRS, LIRMM, Université Montpellier 2, Université Montpellier 3\\
  161 rue Ada\\
  F-34095 Montpellier cedex\\
  France} 
\email{Fabrice.Philippe@lirmm.fr}

\date{(date1), and in revised form (date2).}
\subjclass[2000]{05A17}
\keywords{Integer partitions, chain partitions, double-base chains}

\thanks{This work was done during the first author's leave at the University of
  Calgary's Centre for Information Security and Cryptography (CISaC), under a
  PIMS-CNRS agreement.}

\begin{abstract}
  We consider a special type of integer partitions in which the parts of the
  form $p^aq^b$, for some relatively prime integers $p$ and $q$, are restricted
  by divisibility conditions. We investigate the problems of generating and
  encoding those partitions and give some estimates for several partition
  functions.
\end{abstract} 

\maketitle

\section{Introduction} 

A partition of an integer $n$ is a non-increasing sequence of positive
integers, called parts, summing up to $n$, possibly subject to one or more
constraints. For instance, one may want the parts to be distinct, odd, prime,
powers of some integer, etc.  The most famous reference for integer partitions
is doubtless the textbook by Andrews~\cite{Andrews84}.

Strictly chained partitions are finite sequences of integers that decrease for
the divisibility order. In other words, partitions of the form $n = a_1 + a_2 +
\dots + a_k$ into distinct positive integers $a_1, \dots, a_k$ such that $a_k |
a_{k-1} | \dots | a_1$.  Binary and more generally $m$-ary partitions with
distinct parts are obviously special instances of this type of partitions.  In
general, an integer admits several such partitions; for example
\begin{align*}
  873 &= 512 + 256 + 64 +32 + 8 + 1, \\
  873 &= 720 + 120 + 24 + 6 + 2 + 1, \\
  873 &= 696+ 174 + 3.
\end{align*}
These unconventional partitions have been considered by Erd\"os and Loxton
in~\cite{ErdLox79a}. If $p(n)$ denotes the number of strictly chained partitions
of $n$ and $p_1(n)$ the number of partitions of this type whose smallest part is
1, they show that $p(n) \geq \log_2{n}$ for $n \geq 6$ and $p_1(n) \geq
\frac{1}{2}\log_2{n}$ for $n \geq 27$ except when $n-1$ is prime, in which case
$p_1(n) = 1$.  For $x$ sufficiently large, they also prove that the sum function
$ P(x) = \sum_{1 \leq n \leq x} p(n) $ behaves like $c x^\rho$, where $c$ is an
unknown constant and $\rho \approx 1.72865$ is the unique root of $\zeta(\rho) =
2$, where $\zeta$ is the Riemann zeta function.

In this paper, we investigate a special case of the type of partitions defined
above. We consider strictly chained $(p,q)$-ary partitions, i.e., partitions
with distinct parts of the form $p^aq^b$, with the further constraint that each
part is a multiple of the following one. For sake of simplicity, we further
assume that $p$ and $q$ are relatively prime integers greater than 1.

This work arose from recent developments on so-called double-base chains, in
particular their use in speeding-up exponentiation and elliptic curve scalar
multiplication~\cite{DimImbMis08:mathcomp}.  The first aim of this paper is to
provide some theoretical results as a basis of further algorithmic studies. We
investigate several problems including those of generating, encoding and
counting those partitions. We also give some results on the length (the number
of parts) of the shortest partitions of that type.  The special case
$\min(p,q)=2$ will be given a special attention, in particular the case
$(p,q)=(2,3)$. In addition, many of our results generalize almost immediately to
any number of relatively prime integers instead of $(p,q)$ only.

In the following, the set of all strictly chained $(p,q)$-ary partitions whose
parts sum up to $U$ is denoted by $\Omega(U)$, and its subset of partitions of
$U$ with no part 1 by $\Omega^{*}(U)$. Cardinalities of these sets are denoted
by $W$ and $W^{*}$ respectively.

\section{Generating partitions in $\Omega(U)$}
\label{sec:gen_partitions}

\subsection{Complete generation}
\label{sec:complete_gen}

We define three mappings from subsets $\Omega$ of $\Omega(U)$ to the set $P$ of
all (general) partitions; i.e., the non-increasing sequences of positive
integers whose sum is finite. Let $\varpi \in \Omega$.  The first map consists
in multiplying each part of $\varpi$ by $p$, and the second one does the same
with $q$ instead of $p$.  The resulting sets of partitions are denoted
$^p\Omega$ and $^q\Omega$ respectively.

Definition of the third mapping depends on $(p,q)$. If $\min(p,q)=2$, we define the
\emph{binary amount} of a partition as the sum of all its binary parts or $0$ if
none.  The third mapping is then defined as follows:
\begin{myitemize}
\item if $\min(p,q)=2$, increase by 1 the binary amount of $\varpi$,
\item if $\min(p,q)>2$, add a part one to $\varpi$.  
\end{myitemize}
In both cases, the resulting set of partitions is denoted by $^1\Omega$.  
Each of these three mapping is clearly injective. Moreover,
$^p\Omega(U) \subset \Omega(pU)$ and $^q\Omega(U) \subset \Omega(qU)$. However,
$^1\Omega(U) \not \subset \Omega(U+1)$ in general.  Indeed, if $\min(p,q)>2$,
the part $1$ may appear twice in $^1\Omega(U)$ (remember that our mappings take
their values in $P$), and even in the binary case the divisibility condition may
be lost. For example, the strictly chained $(2,3)$-ary partition $(6,2,1)$ is
turned into $(6,4)$ which is not in $\Omega(10)$.

If $U$ is a positive integer the set $\Omega(U)$ is possibly not empty (of course, if $\min(p,q)=2$
it contains at least the binary partition of $U$), whereas it is always empty if $U \not \in
\n$. We shall consider that $\Omega(0)$ contains the empty sequence $()$. This convention is
consistent with the simple but essential identities given in Lemma~\ref{lemomega} below. The first
one is readily obtained by considering partitions without or with part 1, and the second one by
noticing that, in a partition with no part 1, either $p$ or $q$ (or both) must divide the smallest
part, and then all parts. Disjoint union of sets is denoted additively.

\begin{lemm} 
  \label{lemomega}
  For any integer $U$, 
  \begin{align*}
    \Omega(U)=\Omega^*(U)+{^1\Omega^*(U-1)},\qquad
    \Omega^*(U)={^p}\Omega(U/p)\cup{^q}\Omega(U/q)
  \end{align*}
\end{lemm}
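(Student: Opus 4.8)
The plan is to prove the two set identities directly by partitioning $\Omega(U)$ according to structural features, exactly as the authors hint in the paragraph preceding the statement. For the first identity, I would split $\Omega(U)$ into those strictly chained $(p,q)$-ary partitions that contain the part $1$ and those that do not. The latter set is by definition $\Omega^*(U)$, so the work is to show that the partitions containing a part $1$ are in bijection with $^1\Omega^*(U-1)$. Here I would invoke the third mapping $^1(\cdot)$ defined in the excerpt, which is injective, and argue that removing the part $1$ (or, in the binary case with $\min(p,q)=2$, decreasing the binary amount by $1$) from a partition in $\Omega(U)$ that contains a $1$ yields a partition in $\Omega^*(U-1)$; conversely $^1$ applied to such a partition recovers the original. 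One must check that the result of removal genuinely has no part $1$: in the non-binary case removing the unique part $1$ clearly does so, while in the binary case the "binary amount" bookkeeping is precisely what guarantees the smallest remaining part is not $1$.

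For the second identity, I would take an arbitrary $\varpi \in \Omega^*(U)$, so $\varpi$ has smallest part $a_k > 1$ of the form $p^aq^b$. Since $a_k \neq 1$, at least one of $p,q$ divides $a_k$, and by the divisibility chain $a_k \mid a_{k-1} \mid \dots \mid a_1$ that same prime-like factor divides every part. If $p \mid a_k$, then dividing every part by $p$ yields a strictly chained $(p,q)$-ary partition summing to $U/p$, i.e. an element of $\Omega(U/p)$ whose image under $^p(\cdot)$ is $\varpi$; symmetrically if $q \mid a_k$. This shows $\Omega^*(U) \subseteq {^p}\Omega(U/p) \cup {^q}\Omega(U/q)$, and the reverse inclusion follows because $^p\Omega(U/p) \subset \Omega(U)$ and $^q\Omega(U/q) \subset \Omega(U)$ with all parts multiples of $p$ or $q$ respectively, hence none equal to $1$.

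The step I expect to be the main obstacle is the binary case ($\min(p,q)=2$) of the first identity, where the third mapping is not simply "append a part $1$" but rather "increase the binary amount by $1$." I would need to argue carefully that this operation is a genuine bijection onto the partitions of $U$ containing a part $1$, and in particular that the divisibility and distinctness constraints are preserved in both directions despite the fact that $^1\Omega(U) \not\subset \Omega(U+1)$ in general, as the authors explicitly warn. The resolution is that we restrict the domain to $\Omega^*(U-1)$ (no part $1$), which is exactly what rules out the problematic collisions illustrated by the $(6,2,1) \mapsto (6,4)$ example. A secondary subtlety worth a sentence is checking the degenerate boundary cases against the $\Omega(0) = \{()\}$ convention, ensuring the identities hold for small or non-integer $U$ where one or both sides may be empty.
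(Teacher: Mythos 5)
Your proposal is correct and takes essentially the same route as the paper, whose own proof is just the remark preceding the lemma: split $\Omega(U)$ according to the presence of the part $1$, and for the second identity observe that $p$ or $q$ divides the smallest part and hence, by the chain condition, every part. Your extra care about the binary case is exactly the right point and resolves as you suspect: on $\Omega^*(U-1)$ all binary parts are at least $2$, so the binary amount is even and increasing it by $1$ is literally appending a part $1$ with no carry, which is precisely why the $(6,2,1)\mapsto(6,4)$ pathology cannot occur on this restricted domain.
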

Using Lemma~\ref{lemomega}, ground value $\Omega(1)=\{(1)\}$ and convention
$\Omega(0)=\{()\}$, partitions in $\Omega(U)$ can be generated recursively.
\begin{coro}
  \label{coromega}
  Let $k_0=p^{-1}\bmod{q}$ and $\ell_0=q^{-1}\bmod{p}$. Then, for all $U \in \n$, 
  \begin{align}
    \label{omegapqU}
    \Omega(pqU)&={^p}\Omega(qU)+{^q}\big(\Omega(pU)\setminus{^p}\Omega(U)\big),\\
    \label{omegapqU+1}
    \Omega(pqU+1)&={^{1p}}\Omega(qU)+{^{1q}}\big(\Omega(pU)\setminus{^p}\Omega(U)\big),
  \end{align}
  and if $1<r<pq$  
  \begin{equation}
    \label{omegapqU+r}
    \Omega(pqU+r) =
    \begin{cases}
      {^p}\Omega(qU+k_0)+{^{1q}}\Omega(pU+p-\ell_0) & \text{if } r = k_0p\\
      {^q}\Omega(pU+\ell_0)+{^{1p}}\Omega(qU+q-k_0) & \text{if } r = \ell_0q\\
      {^p}\Omega(qU+k) & \text{if } r = kp,\,k\neq k_0\\
      {^{1p}}\Omega(qU+k) & \text{if } r = kp+1,\, k \neq q-k_0\\
      {^q}\Omega(pU+\ell) & \text{if } r = \ell q,\, \ell \neq \ell_0\\
      {^{1q}}\Omega(pU+\ell) & \text{if }r = \ell q+1,\, \ell \neq p-\ell_0\\
      \emptyset & \text{otherwise}.
    \end{cases}
  \end{equation}
\end{coro}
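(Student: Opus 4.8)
The plan is to derive all three displayed equations from the two identities of Lemma~\ref{lemomega} by a single systematic case analysis on the residue $r$, backed by one number‑theoretic identity relating $k_0$ and $\ell_0$. Throughout I set $V=pqU+r$ and start from the first identity, $\Omega(V)=\Omega^*(V)+{}^1\Omega^*(V-1)$, then expand each starred set by the second identity, $\Omega^*(W)={}^p\Omega(W/p)\cup{}^q\Omega(W/q)$. The guiding principle is that $\Omega(W/p)$ (respectively $\Omega(W/q)$) contributes nothing unless $p\mid W$ (respectively $q\mid W$), since $\Omega$ is empty off the integers; so the whole problem reduces to tracking which of the four divisibility events $p\mid V$, $q\mid V$, $p\mid(V-1)$, $q\mid(V-1)$ occur, i.e.\ which of $p\mid r$, $q\mid r$, $p\mid(r-1)$, $q\mid(r-1)$ hold.

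Next I would record the arithmetic facts that drive everything. From $k_0p\equiv 1\pmod q$ and $\ell_0q\equiv 1\pmod p$ with $0<k_0<q$ and $0<\ell_0<p$, the sum $k_0p+\ell_0q$ is $\equiv 1\pmod{pq}$ and lies strictly between $1$ and $2pq$, so $k_0p+\ell_0q=pq+1$. This yields $(k_0p-1)/q=p-\ell_0$ and $(\ell_0q-1)/p=q-k_0$, which are exactly the arguments appearing in the first two lines of \eqref{omegapqU+r}. For the bookkeeping I would note that, when $1<r<pq$, the four events above are pairwise incompatible except for the two combinations $\{p\mid r,\;q\mid(r-1)\}$ and $\{q\mid r,\;p\mid(r-1)\}$: indeed $pq\nmid r$ and $pq\nmid(r-1)$ (both lie strictly between $0$ and $pq$), and $p$ (or $q$) cannot divide two consecutive integers. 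Consequently each starred set reduces to a single one of its two maps rather than a genuine union, and the seven cases of \eqref{omegapqU+r} are mutually exclusive and exhaustive.

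With this in place, \eqref{omegapqU+r} is read off case by case. In the four single‑hit cases exactly one of $\Omega^*(V)$, ${}^1\Omega^*(V-1)$ survives: $r=kp$ with $k\ne k_0$ gives ${}^p\Omega(qU+k)$, $r=\ell q$ with $\ell\ne\ell_0$ gives ${}^q\Omega(pU+\ell)$, while $r=kp+1$ with $k\ne q-k_0$ gives ${}^{1p}\Omega(qU+k)$ and $r=\ell q+1$ with $\ell\ne p-\ell_0$ gives ${}^{1q}\Omega(pU+\ell)$ (here the hit is on $V-1$, producing the extra $^1$). In the two double‑hit cases both summands survive: $r=k_0p$ gives ${}^p\Omega(qU+k_0)+{}^{1q}\Omega(pU+p-\ell_0)$ and $r=\ell_0q$ gives ${}^q\Omega(pU+\ell_0)+{}^{1p}\Omega(qU+q-k_0)$, where the identity $k_0p+\ell_0q=pq+1$ supplies the arguments $p-\ell_0$ and $q-k_0$; in all remaining residues both starred sets are empty, giving $\emptyset$. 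The disjointness of the two summands in the double‑hit cases is already furnished by the first identity of Lemma~\ref{lemomega}, so the additive notation is legitimate.

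Finally, \eqref{omegapqU} and \eqref{omegapqU+1} are exactly the excluded boundary values $r=0$ and $r=1$, and they are excluded precisely because they permit the only genuinely non‑disjoint situations: for $r=0$ both $p$ and $q$ divide $V=pqU$, and for $r=1$ both divide $V-1=pqU$. The main—and really the only nonroutine—step is to turn the union ${}^p\Omega(qU)\cup{}^q\Omega(pU)$ into a disjoint union. I would argue that a partition lies in both sets iff each of its parts is divisible by $p$ and by $q$, hence by $pq$ since $\gcd(p,q)=1$; thus a partition ${}^q\tau$ with $\tau\in\Omega(pU)$ lies in ${}^p\Omega(qU)$ iff $\tau\in{}^p\Omega(U)$, whence ${}^q\Omega(pU)\setminus{}^p\Omega(qU)={}^q\big(\Omega(pU)\setminus{}^p\Omega(U)\big)$ and ${}^p\Omega(qU)\cup{}^q\Omega(pU)={}^p\Omega(qU)+{}^q\big(\Omega(pU)\setminus{}^p\Omega(U)\big)$. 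Since $\Omega^*(pqU-1)=\emptyset$, the first identity of Lemma~\ref{lemomega} collapses $\Omega(pqU)$ to this expression, proving \eqref{omegapqU}; applying the injective map ${}^1$ (which preserves the disjointness) and using $\Omega^*(pqU+1)=\emptyset$ then yields \eqref{omegapqU+1}.
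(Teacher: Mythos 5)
Your proof is correct and takes essentially the same route as the paper: both expand $\Omega(pqU+r)$ via the two identities of Lemma~\ref{lemomega}, sort the cases by which of $p$, $q$ divide $r$ and $r-1$, and hinge on the same Diophantine fact, your identity $k_0p+\ell_0q=pq+1$ being exactly the paper's observation that $(k_0,\,p-\ell_0)$ is the unique positive solution of $kp-\ell q=1$. The only difference is one of detail: you explicitly justify steps the paper declares immediate, notably converting the union ${}^p\Omega(qU)\cup{}^q\Omega(pU)$ into the disjoint union behind \eqref{omegapqU} and \eqref{omegapqU+1} via divisibility of all parts by $pq$.
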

\begin{proof} Relations~\eqref{omegapqU} and~\eqref{omegapqU+1} are immediate.  Since
$p\wedge q=1$, for $1\leq k<q$ and $1\leq \ell<p$ we have
$\Omega^*(kp)={^p}\Omega(k)$ and $\Omega^*(\ell q)={^q}\Omega(\ell)$. Moreover,
there exist $k,\ell$ such that $r=kp$ and $r-1 = \ell q$ if and only if $k=k_0$
and $\ell = p - \ell_0$. Indeed, $(k_0, p - \ell_0)$ is the unique positive
solution of the equation $kp - \ell q = 1$. Thus the first case
in~\eqref{omegapqU+r} is proved and the second one is obtained by symmetry. The
other cases follow easily.  \end{proof}

These relations take the simplest form when $p=2$, as the last three cases
in~\eqref{omegapqU+r} disappear. Relation \eqref{omegapqU} is the less efficient
one in general because of set difference. It may be improved in this case.
\begin{prop}
  \label{propomegap2} 
  If $p=2$, we have for all $U \in \n$  
  \begin{align}\nonumber
    \Omega(qU)&={^q}\Omega(U)+{^1}\Omega(qU-1). 
  \end{align}
\end{prop}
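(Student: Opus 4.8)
The plan is to partition $\Omega(qU)$ according to whether or not $q$ divides the smallest part, and to match the two resulting classes with the two terms on the right. Since $p\wedge q=1$ and $q$ is odd, a partition in $\Omega(qU)$ whose smallest part is divisible by $q$ has, by the chain condition, all its parts divisible by $q$; dividing every part by $q$ then yields an element of $\Omega(U)$, and this is exactly the inverse of the (injective) map ${}^q$. So I would first show that the set of partitions in $\Omega(qU)$ with $q$ dividing the smallest part is precisely ${}^q\Omega(U)$.

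Next I would treat the complementary class, the partitions of $qU$ whose smallest part is not divisible by $q$, i.e.\ is a pure power of $2$; equivalently, those with binary amount at least $1$. The claim is that ${}^1$ is a bijection from $\Omega(qU-1)$ onto this class. For surjectivity I would use the inverse operation ``decrease the binary amount by $1$'': since the binary parts are exactly the powers of $2$ occurring in the binary expansion of the binary amount, and since decrementing a positive integer never raises its highest set bit, the largest binary part can only shrink and hence still divides the smallest non-binary part. Thus this inverse lands in $\Omega(qU-1)$ and exhibits a preimage for every partition in the class; injectivity of ${}^1$ is already noted in the text.

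The main obstacle---and the only place where the $-1$ shift and the oddness of $q$ are used---is to verify that ${}^1\Omega(qU-1)\subseteq\Omega(qU)$, i.e.\ that increasing the binary amount by $1$ never breaks the divisibility chain. (The excerpt already warns that ${}^1\Omega(U)\not\subseteq\Omega(U+1)$ in general, as in ${}^1(6,2,1)=(6,4)$.) Here I would argue that the only way the increment can fail, which requires a non-binary part to be present, is a carry past the exponent $c$ where $2^c$ exactly divides the smallest non-binary part; this forces the binary amount to equal $2^{c+1}-1$. But every non-binary part is divisible by $q$, so the total sum is congruent to the binary amount modulo $q$; for a partition of $qU-1$ this sum is $\equiv-1\pmod q$, whence $2^{c+1}\equiv0\pmod q$, impossible since $q$ is odd and greater than $1$. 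This rules out the bad carry and keeps the increment inside $\Omega(qU)$.

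Finally, having identified the two classes as ${}^q\Omega(U)$ and ${}^1\Omega(qU-1)$ and observed that they are disjoint (one has smallest part divisible by $q$, the other does not), I would conclude $\Omega(qU)={}^q\Omega(U)+{}^1\Omega(qU-1)$. Alternatively, the same splitting can be read off from Lemma~\ref{lemomega} by specializing $p=2$; the substance is the replacement of the set difference appearing in Corollary~\ref{coromega} by the clean ${}^1$-term, which is exactly what the congruence argument secures.
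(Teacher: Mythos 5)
Your proposal is correct and follows essentially the same route as the paper's proof: both identify ${}^q\Omega(U)$ as the class of partitions of $qU$ with binary amount zero, and show that incrementing the binary amount maps $\Omega(qU-1)$ bijectively onto the complement, the key point in each case being that the binary amount of a partition of $qU-1$ is $\equiv -1 \pmod{q}$ (non-binary parts being divisible by $q$), hence never of the form $2^{n}-1$, so the increment cannot produce a carry violating the chain condition. Your write-up merely makes explicit the congruence computation and the validity of the inverse map (decrementing never raises the highest set bit), which the paper leaves implicit.
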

\begin{proof}
Since $q\wedge 2=1$, $2^n \not\equiv 0 \pmod{q}$. Thus the binary amount of a
partition in $\Omega(qU-1)$ is never of the form $2^n-1$, so that adding 1 to it
does not affect the chain condition. Therefore,
${^1}\Omega(qU-1)\subset\Omega(qU)$. Moreover, the binary amount of the result
is not 0, thus
${^1}\Omega(qU-1)\subset\Omega(qU)\setminus{^q}\Omega(U)$. Finally, this
inclusion is an equality since the binary amount of every partition in
$\Omega(qU)\setminus{^q}\Omega(U)$ is positive.
\end{proof} 

% -----------------------------------------------------------------------------------------

\subsection{Encoding with words}
\label{sec:encoding}

When $p=2$, relations given in Corollary~\ref{coromega} provide us with a compact
representation of $\Omega(U)$. An edge-labeled weakly binary tree with root $U$ and labels
in $\{1,2,q\}$ can be used to represent the partitions in $\Omega(U)$ by words on the
alphabet $\{1,2,q\}$. (The nodes are the successive arguments of $\Omega$.)  Using this
representation, the value of $U$ is easily recovered from its encoding word: we start from
the leaves (whose value equals $1$) and go through the root $U$ while performing
operations $+1$, $\times 2$ and $\times q$ according to the edge labels.  As an example,
the tree representing $\Omega(19)$ in the case $(p,q)=(2,3)$ is given in
Figure~\ref{fig:figtree19}.  Note that the language obtained this way to represent
$\Omega(U)$ is a hypercode (no word is a sub-word of another), so that other compact
representations are available (the tree in Fig.~\ref{fig:figtree19} represents
$\Omega(19)$ as a prefix code).

\begin{figure}[h]
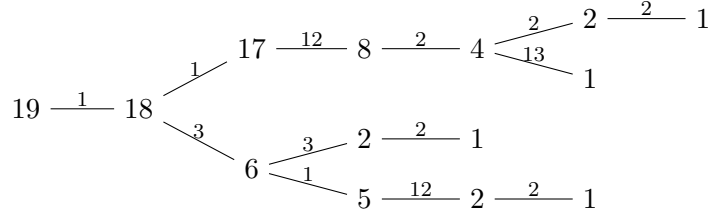

  \centering
  \beginpgfgraphicnamed{pgfgraphic_figtree19}
  \endpgfgraphicnamed
  \caption{$\Omega(19) = \{1112222, 1112213, 1332, 131122\}$ for
    $(p,q) = (2,3)$}
  \label{fig:figtree19}
\end{figure}

In the general case, relations~\eqref{omegapqU} and~(\ref{omegapqU+1}) prevent such simple, compact encodings. Nevertheless, a natural graphic
representation easily follows from the definition by noticing that strictly
chained $(p,q)$-ary partitions are partitions with distinct parts in the set
$\{p^{a}q^b,\;(a,b) \in \n^2\}$, under the constraint that couples $(a,b)$ of
exponents form a chain in $\n^2$ endowed with the usual product order.  These
chains can be encoded with words in $\{0,1,2,3\}^*$.  Consider a length-maximal
increasing path $\mathcal{P}$ in $\n^2$ containing a given chain $C$, beginning
at point (0,0) and ending at the maximal point in $C$. Such a path is called
\emph{$C$-filling}. Then, starting from the empty word, iteratively form a word
by using the following rule when progressing on $\mathcal{P}$: at point $(a,b)$,
the added letter is
\begin{center}
  \begin{tabular}{r@{\quad if }l}
    0 &  $(a,b)\not\in C$ and $(a+1,b)\in \mathcal{P}$,\\
    2 &  $(a,b)\not\in C$ and $(a,b+1)\in \mathcal{P}$,\\
    1 &  $(a,b)\in C$ and $(a+1,b)\in \mathcal{P}$,\\
    3 &  $(a,b)\in C$ and $(a,b+1)\in \mathcal{P}$ or $(a,b)$ maximal in $C$. 
  \end{tabular}
\end{center}
Given a chain $C$ in $\n^2$, there are generally many $C$-filling paths.  For
example, the chain $C = (0,0),(1,1)$ admits two $C$-filling paths, encoded by
the words 123 and 303. In order to get an unambiguous correspondence between
chains and words, we have to select a particular $C$-filling path.  We choose
the one with minimal $a$-values: for each $i$, the $i^\mathrm{th}$ point
$(a_{i},i+1-a_{i})$ in this path is such that $a_{i}$ is the minimal abscissa of
the $i^\mathrm{th}$ points of all $C$-filling paths.  In other words, we always
go North before going East as illustrated in Figure~\ref{figlattice}.  We denote
by $w_{C}$ the associated word.  When $C$ runs through all finite increasing
chains in $\n^2$, it can be shown that the words $w_{C}$ are precisely those
ending by 3 and with neither 02 nor 12 as factors. Note that the language
representing $\Omega(U)$ is clearly an infix code (no word is a factor of
another).
\begin{figure}[h]
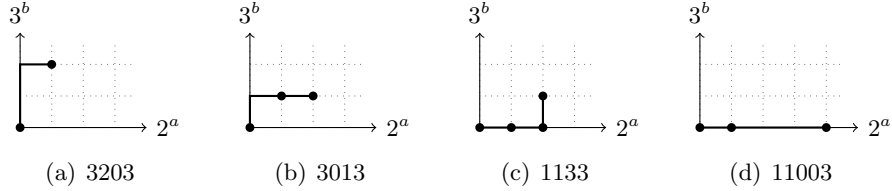

  \centering
  \subfigure[3203]{
    \beginpgfgraphicnamed{pgfgraphic_3203}
%     \begin{tikzpicture}[scale=0.42, every text node part/.style={font=\footnotesize}]
%       \draw[grid] (0,0) grid (3.6, 2.6);
%       \draw[->] (0,0) -- (4,0);
%       \draw[->] (0,0) -- (0,3);
%       \draw (4,0) node[anchor=west] {$2^a$};
%       \draw (0,3) node[anchor=south] {$3^b$};
%       \draw[thick] (0,0) |- (1,2);
%       \foreach \a/\b in {0/0, 1/2}
%       \fill (\a, \b) circle (4pt);
%     \end{tikzpicture}
    \endpgfgraphicnamed
    \label{fig:3203}
  }
  \subfigure[3013]{
    \beginpgfgraphicnamed{pgfgraphic_3013}
%     \begin{tikzpicture}[scale=0.42, every text node part/.style={font=\footnotesize}]
%       \draw[grid] (0,0) grid (3.6, 2.6);
%       \draw[->] (0,0) -- (4,0);
%       \draw[->] (0,0) -- (0,3);
%       \draw (4,0) node[anchor=west] {$2^a$};
%       \draw (0,3) node[anchor=south] {$3^b$};
%       \draw[thick] (0,0) |- (2,1);
%       \foreach \a/\b in {0/0, 1/1, 2/1}
%       \fill (\a, \b) circle (4pt);
%     \end{tikzpicture}
    \endpgfgraphicnamed
    \label{fig:3013}
  }
  \subfigure[1133]{
    \beginpgfgraphicnamed{pgfgraphic_1133}
%     \begin{tikzpicture}[scale=0.42, every text node part/.style={font=\footnotesize}]
%       \draw[grid] (0,0) grid (3.6, 2.6);
%       \draw[->] (0,0) -- (4,0);
%       \draw[->] (0,0) -- (0,3);
%       \draw (4,0) node[anchor=west] {$2^a$};
%       \draw (0,3) node[anchor=south] {$3^b$};
%       \draw[thick] (0,0) -| (2,1);
%       \foreach \a/\b in {0/0, 1/0, 2/0, 2/1}
%       \fill (\a, \b) circle (4pt);
%     \end{tikzpicture}
    \endpgfgraphicnamed
    \label{fig:1133}}
  \subfigure[11003]{
    \beginpgfgraphicnamed{pgfgraphic_11003}
%     \begin{tikzpicture}[scale=0.42, every text node part/.style={font=\footnotesize}]
%       \draw[grid] (0,0) grid (4.6, 2.6);
%       \draw[->] (0,0) -- (5,0);
%       \draw[->] (0,0) -- (0,3);
%       \draw (5,0) node[anchor=west] {$2^a$};
%       \draw (0,3) node[anchor=south] {$3^b$};
%       \draw[thick] (0,0) -- (4,0);
%       \foreach \a/\b in {0/0, 1/0, 4/0}
%       \fill (\a, \b) circle (4pt);
%     \end{tikzpicture}
    \endpgfgraphicnamed
    \label{fig:11003}}
  \caption{$\Omega(19)$ for $(p,q)=(2,3)$ and encoding words}
  \label{figlattice}
\end{figure}

\subsection{Random generation} 
\label{sec:random_gen}

The tree-based representation of $\Omega(U)$ given in the previous subsection
allows for a straightforward uniform unbiased sampling method, as soon as the
weight of each subtree is known. We show how to efficiently compute these
weights in the next section. Accordingly, defining a Markov chain is seemingly
superfluous for uniform random generation.

Nevertheless, availability of a symmetric, connected transition graph on
$\Omega(U)$ may be useful for algorithmic purposes, in particular when searching
to minimize the number of parts through recoding or while performing addition. We
construct such a graph in the special case $(p,q)=(2,3)$. Transitions are based
upon elementary identities which are represented in Figure~\ref{fig:trans}
below. The second identity is a generalization of $4 = 3+1$.

\begin{figure}[h]
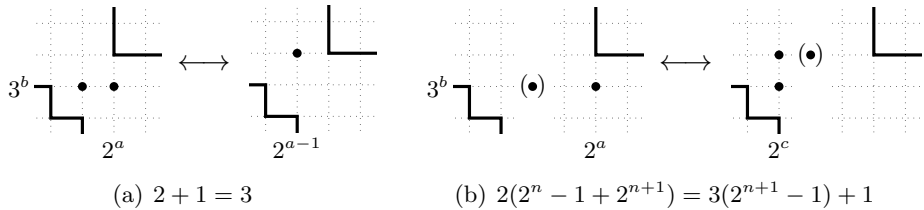

  \centering
  \subfigure[$2+1=3$]{
    \beginpgfgraphicnamed{pgfgraphic_transleft1}
%     \begin{tikzpicture}[scale=0.42, every text node part/.style={font=\footnotesize}]
%       \begin{scope}
%         \clip (0.5,0.5) rectangle (4.5,4.5);
%         \draw[grid] (0,0) grid (5,5);
%         \draw[very thick] (0,2) -| (1,1) -| (2,0);
%         \draw[very thick] (3,5) |- (5,3);
%         \foreach \a/\b in {2/2, 3/2}
%         \fill (\a, \b) circle (4pt);
%       \end{scope}
%       \draw (0,2) node {$3^b$};
%       \draw (3,0) node {$2^a$};
%     \end{tikzpicture}
    \endpgfgraphicnamed
    \raisebox{13mm}{\;\;$\longleftrightarrow$}
    \beginpgfgraphicnamed{pgfgraphic_transleft2}
%     \begin{tikzpicture}[scale=0.42, every text node part/.style={font=\footnotesize}]
%       \begin{scope}
%         \clip (0.5,0.5) rectangle (4.5,4.5);
%         \draw[grid] (0,0) grid (5,5);
%         \draw[very thick] (0,2) -| (1,1) -| (2,0);
%         \draw[very thick] (3,5) |- (5,3);
%         \foreach \a/\b in {2/3}
%         \fill (\a, \b) circle (4pt);
%       \end{scope}
%       \draw (2,0) node {$2^{a-1}$};
%     \end{tikzpicture}
    \endpgfgraphicnamed
    \label{fig:transleft}
  }
 \hfill
  \subfigure[$2(2^{n}-1+2^{n+1}) = 3(2^{n+1}-1)+1$]{
    \beginpgfgraphicnamed{pgfgraphic_transright1}
%     \begin{tikzpicture}[scale=0.42, every text node part/.style={font=\footnotesize}]
%       \begin{scope}
%         \clip (0.5,0.5) rectangle (6.5,4.5);
%         \draw[grid] (0,0) grid (7,5);
%         \draw[very thick] (0,2) -| (1,1) -| (2,0);
%         \draw[very thick] (5,5) |- (7,3);
%         \fill[white] (2.5,0) rectangle (3.5,5);
%         \draw (3,2) node {(~)};
%         \foreach \a/\b in {3/2, 5/2}
%         \fill (\a, \b) circle (4pt);
%       \end{scope}
%       \draw (5,0) node {$2^a$};
%       \draw (0,2) node {$3^b$};
%     \end{tikzpicture}
    \endpgfgraphicnamed
    \raisebox{13mm}{\;\;$\longleftrightarrow$}
    \beginpgfgraphicnamed{pgfgraphic_transright2}
%     \begin{tikzpicture}[scale=0.42, every text node part/.style={font=\footnotesize}]
%       \begin{scope}
%         \clip (0.5,0.5) rectangle (6.5,4.5);
%         \draw[grid] (0,0) grid (7,5);
%         \draw[very thick] (0,2) -| (1,1) -| (2,0);
%         \draw[very thick] (5,5) |- (7,3);
%         \fill[white] (2.5,0) rectangle (3.5,5);
%         \draw (3,3) node {(~)};
%         \foreach \a/\b in {2/2, 2/3, 3/3}
%         \fill (\a, \b) circle (4pt);
%       \end{scope}
%       \draw (2,0) node {$2^{c}$};
%     \end{tikzpicture}
    \endpgfgraphicnamed
    \label{fig:transright}
  }
  \caption{Valid symmetric transitions between partitions in $\Omega(U)$ for
    $(p,q) = (2,3)$}
  \label{fig:trans}
\end{figure}

More formally, choose a partition $\varpi$ in $\Omega(U)$. 
% First consider a part
% $2^a3^b$ in $\varpi$ with maximal $a$-value for fixed $b$. 
For some fixed value $b$, consider the part $2^a3^b$ in $\varpi$ with maximal $a$-value.  If the
part $2^{a-1}3^b$ also exists in $\varpi$ (Figure~\ref{fig:transleft}), subtracting both parts and
adding the part $2^{a-1}3^{b+1}$ results in an other partition in $\Omega(U)$. Indeed, the other
parts $2^c3^d$ in $\varpi$ satisfy either $c\geq a,\,d>b$ or $c\leq a-1,\, d\leq b$, so that the
chain constraint is respected. The bounds for authorized $(c,d)$-values are represented with bold
lines in the lower-left and upper-right corners in Figure~\ref{fig:trans}.  If the part $2^{a-1}3^b$
does not exist in $\varpi$, let $C$ be the maximal set of contiguous parts of the form $2^{a-i}3^b$,
with $i$ running from 2 to an eventual maximal value $n$ (Figure~\ref{fig:transright}).  Next
consider the value $2^{c}3^b$ with $c=a-2$ if $C$ is empty or $c=a-n-1$ otherwise. This value
is not a part by definition. If $c\geq 0$ and if there is no part in $\varpi$ of the form
$2^{c+1}3^d$ with $d<b$, we obtain another partition in $\Omega(U)$ by multiplying every part in $C$
by $3$, subtracting $2^a3^b$ from $\varpi$, and adding the two parts $2^c3^b$, $2^c3^{b+1}$.  It is
easy to see that the two families of transitions defined above are reversible, and the proof is
omitted.

Let $G(U)$ be the symmetric graph on $\Omega(U)$ corresponding to the above
transitions. The transition graph on $\Omega(27) =$ \{11013, 13003, 1333, 21003,
2133, 2213, 2223\} is given as an example in
Figure~\ref{fig:G27}.

\begin{figure}
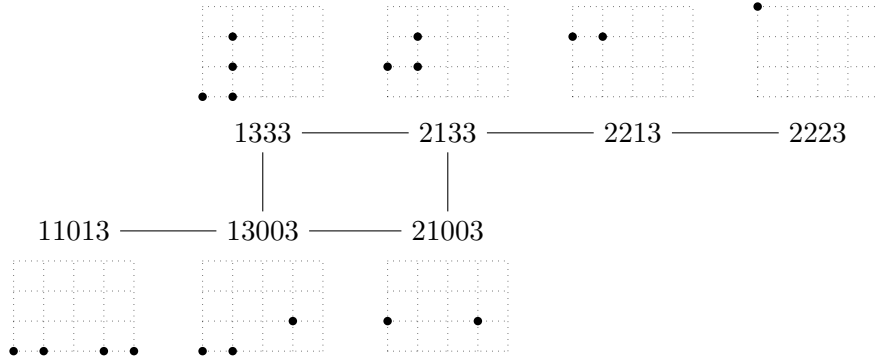

  \centering
  \beginpgfgraphicnamed{pgfgraphic_G27}
  \endpgfgraphicnamed
  \caption{The transition graph $G(27)$ for $(p,q) = (2,3)$}
  \label{fig:G27}
\end{figure}

\begin{theo}
  For $(p,q)=(2,3)$, the graph $G(U)$ is connected, and its diameter is at most
  $\dfrac{\log^2{U}}{\log{2}\log{3}}$.
\end{theo}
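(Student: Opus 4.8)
The plan is to exploit the reversibility of the two transition families, already noted in the text, so that $G(U)$ is an undirected graph. It then suffices to fix one reference partition and bound the distance from every $\varpi\in\Omega(U)$ to it. I would take the reference to be the \emph{binary} partition $\beta_U$ of $U$, i.e. the unique partition all of whose parts are powers of $2$ (all exponents $b=0$); it lies in $\Omega(U)$ for every $U$ since $q=3$ is odd. Once every $\varpi$ is joined to $\beta_U$ by a path of length at most $R$, connectivity is immediate and the diameter is at most $2R$, so it is enough to achieve $2R\le \log^2 U/(\log 2\,\log 3)$.

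To route $\varpi$ to $\beta_U$ I would strip powers of $3$ from the top. Writing each part as $2^a3^b$, set $\Phi(\varpi)=\sum_{\text{parts}}b$, the total $3$-content, so that $\Phi(\varpi)=0$ precisely when $\varpi=\beta_U$. The reverse of the first transition splits $2^{a-1}3^{b+1}$ into $2^a3^b+2^{a-1}3^b$, and the reverse of the second splits one high part into several parts of strictly smaller maximal exponent; in both reverse directions the total $3$-content strictly drops. The core lemma to prove is that, as long as $\Phi(\varpi)>0$, at least one of these reverse moves applies to the part of maximal $b$ (and, among those, of maximal $a$) and decreases $\Phi$ by at least $1$. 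Granting this, $\varpi$ reaches $\beta_U$ in at most $\Phi(\varpi)$ steps, so $R\le \max_{\varpi\in\Omega(U)}\Phi(\varpi)$.

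It remains to bound this maximum. The parts of $\varpi$ are distinct lattice points $(a,b)$ with $2^a3^b\le U$ forming a chain for the product order, hence lying in the triangle $a\log 2+b\log 3\le \log U$; in particular $b\le\log_3 U$ and the number of parts is at most $\log_2 U+\log_3 U+1$. Optimising $\sum b$ over such chains — the extremal configuration climbs the $b$-axis and then extends horizontally near the top — should give $\max_\varpi\Phi(\varpi)\le \tfrac12\,\log^2 U/(\log 2\,\log 3)$, whence $2R\le \log^2 U/(\log 2\,\log 3)$ as required. The slack in this constant is where the room in the statement comes from.

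I expect the main obstacle to be the core lemma, and specifically the second transition (the generalisation of $4=3+1$). When the maximal part sits at level $b\ge 2$ and the slots below it are blocked so that the first transition cannot be used in reverse, one must show the second transition is available in reverse: this means checking its side conditions — the maximal run $C$ of contiguous parts $2^{a-i}3^b$, the inequality $c\ge 0$, and the absence of any part $2^{c+1}3^d$ with $d<b$ — and verifying that the resulting move both preserves the chain structure and strictly lowers $\Phi$. Discharging these cases, together with confirming that the extremal value of $\Phi$ is indeed at most half the box quantity $\log_2 U\cdot\log_3 U$, is the technical heart of the argument.
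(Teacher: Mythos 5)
Your overall strategy coincides with the paper's: route every partition to the binary partition of $U$, use reversibility to get connectivity, and double the one-way bound for the diameter, with the final count coming from the triangle $a\log 2+b\log 3\le \log U$. But your core lemma is false as stated, and it is precisely the step you identified as the technical heart. Take the reverse of the first transition, which splits $2^{a-1}3^{b+1}$ into $2^{a}3^{b}+2^{a-1}3^{b}$: one part at $3$-level $b+1$ is replaced by \emph{two} parts at level $b$, so your potential $\Phi(\varpi)=\sum b$ changes by $2b-(b+1)=b-1$. This is an \emph{increase} whenever $b\ge 2$, i.e.\ whenever the split happens at $3$-level $3$ or higher. (The reverse of the second transition does decrease $\Phi$, but the routing cannot always use it: when $2^{a}3^{b-1}$ is absent from $\varpi$ the only available move is the split, exactly the case where $\Phi$ can grow.) Consequently the bound $R\le\max_{\varpi}\Phi(\varpi)$ has no justification, and with it your entire step count collapses; $\Phi$ is not even guaranteed to ever return below its starting value along the path you construct, so termination itself is unproven in your write-up.

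The paper's accounting repairs exactly this. Let $b$ be the \emph{maximal} $3$-exponent occurring in $\varpi$ and let $a$ be \emph{minimal} with $2^{a}3^{b}$ a part; maximality of $b$ and minimality of $a$ make both downward moves available at once, which also disposes of the side-condition checking for the second transition that you flagged as an obstacle (there is no part $2^{i}3^{d}$ with $d>b$, so the blocking configurations cannot occur). The correct monovariant is then the \emph{number of parts at level $b$}: each move removes one part at level $b$ and creates parts only at level $b-1$. Hence the moves spent at level $b$ are at most the number of admissible abscissae at that level, and summing over $b$ bounds the total number of moves by the number of lattice points $(a,b)\in\n^{2}$ under the line $a\log 2+b\log 3=\log U$, namely about $\frac{\log^{2}U}{2\log 2\log 3}$; doubling gives the stated diameter. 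Your estimate of $\max_\varpi \Phi$ in the last paragraph is plausible but moot, since $\Phi$ was never the right quantity to track. If you replace $\Phi$ by the lexicographic measure (maximal level, count at that level), your argument becomes the paper's proof.
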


\begin{proof} It suffices to show that there is a path between any partition in
$\Omega(U)$ and the binary partition of $U$. Let $\varpi$ be a partition in
$\Omega(U)$ different from the binary one, let $b$ be the largest power of 3
amongst all parts in $\varpi$, and let $a$ be such that $2^a3^b$ is the smallest
part amongst those; i.e. $a = \min\{i\;; 2^i3^b \text{ a part in } \varpi\}$.
Since there is no part in $\varpi$ of the form $2^i3^d$ with $d>b$ ($b$ is
maximal), both downwards transitions are allowed.  If $2^a3^{b-1}$ is not in
$\varpi$, we apply transition $3 = 1+2$ as illustrated below.
\begin{center}
  \beginpgfgraphicnamed{pgfgraphic_3is1p2a}
%   \begin{tikzpicture}[scale=0.5, every text node part/.style={font=\footnotesize}]
%     \begin{scope}
%       \clip (0.5,0.5) rectangle (5.5,3.5);
%       \draw[grid] (0,0) grid (6,4);
%       \draw[very thick] (0,2) -| (1,1) -| (2,0);
%       \draw[very thick] (3,3) -- (6,3);
%       \foreach \a/\b in {2/3}
%       \fill (\a, \b) circle (4pt);
%     \end{scope}
%     \draw (0,3) node {$3^b$};
%     \draw (2,0) node {$2^a$};
%   \end{tikzpicture}
  \endpgfgraphicnamed
  \raisebox{10mm}{\quad$\longrightarrow$\;}
  \beginpgfgraphicnamed{pgfgraphic_3is1p2b}
%   \begin{tikzpicture}[scale=0.5, every text node part/.style={font=\footnotesize}]
%     \begin{scope}
%       \clip (0.5,0.5) rectangle (5.5,3.5);
%       \draw[grid] (0,0) grid (6,4);
%       \draw[very thick] (0,2) -| (1,1) -| (2,0);
%       \draw[very thick] (3,3) -- (6,3);
%       \foreach \a/\b in {2/2, 3/2}
%       \fill (\a, \b) circle (4pt);
%     \end{scope}
%     \draw (2,0) node {$2^a$};
%   \end{tikzpicture}
  \endpgfgraphicnamed
\end{center}
Otherwise, transition $1+3 = 4$ (or its generalization) is allowed, as shown on the
following figures.
\begin{center}
  \beginpgfgraphicnamed{pgfgraphic_1p3is4a}
%   \begin{tikzpicture}[scale=0.5, every text node part/.style={font=\footnotesize}]
%     \begin{scope}
%       \clip (0.5,0.5) rectangle (6.5,3.5);
%       \draw[grid] (0,0) grid (7,4);
%       \draw[very thick] (0,2) -| (1,1) -| (2,0);
%       \draw[very thick] (5,3) -- (7,3);
%       \foreach \a/\b in {2/2, 2/3, 3/3}
%       \fill (\a, \b) circle (4pt);
%     \end{scope}
%     \draw (0,3) node {$3^b$};
%     \draw (2,0) node {$2^a$};
%     \draw (3,3) node {(~)};
%   \end{tikzpicture}
  \endpgfgraphicnamed
  \raisebox{10mm}{\quad$\longrightarrow$\;}
  \beginpgfgraphicnamed{pgfgraphic_1p3is4b}
%   \begin{tikzpicture}[scale=0.5, every text node part/.style={font=\footnotesize}]
%     \begin{scope}
%       \clip (0.5,0.5) rectangle (6.5,3.5);
%       \draw[grid] (0,0) grid (7,4);
%       \draw[very thick] (0,2) -| (1,1) -| (2,0);
%       \draw[very thick] (5,3) -- (7,3);
%       \foreach \a/\b in {3/2, 5/2}
%       \fill (\a, \b) circle (4pt);
%     \end{scope}
%     \draw (2,0) node {$2^a$};
%     \draw (3,2) node {(~)};
%   \end{tikzpicture}
  \endpgfgraphicnamed
\end{center}
In both cases, the number of parts with power of 3 equal to $b$ is reduced by at
least one unit.  Iterating the process a finite number of times results in a
partition in $\Omega(U)$ where the largest power of 3 is less than $b$, and
further iterations eventually drop this exponent to 0; meaning that we have
reached the binary partition.

Regarding the diameter of the graph, it suffices to note that the exponents $a,b
\geq 0$ of the parts of any partition in $\Omega(U)$ are located below the line
$a\log 2+b\log 3=\log U$, so that the number of transitions between $\varpi$ and
the binary partition in the above algorithm cannot exceed the number of possible
$(a,b)$-values (with $a, b \geq 0$), that is, $\frac{\log^2{U}}{2 \log{2}\log{3}}$.
\end{proof}

% -------------------------------------------------------------------------------

\section{Shortest partitions}
\label{sec:short_part}

Finding a partition with a small number of parts, in a reasonable amount of
time, is of major importance for the applications mentioned in the introduction.
For example, in the domain of elliptic curve cryptography, the complexity of
  a scalar multiplication algorithm (the elliptic curve equivalent of an
  exponentiation) based on double-base chains~\cite{DimImbMis08:mathcomp}
  heavily depends on the number of parts in a $(2,3)$-ary partition of the
  exponent. 

If $|\varpi|$ denote the number of parts of a partition $\varpi$, we define
$\sigma(U) = \min_{\varpi \in \Omega(U)} |\varpi|$, the length of the
  shortest partitions in $\Omega(U)$.  For example, from
figure~\ref{figlattice} it is easy to see that $\sigma(19) = 2$.  The values of
$\sigma$ can be easily computed using Corollary~\ref{coromega} by noticing that
$$ 
\sigma(pqU) = \min (\sigma(qU), \sigma(pU)), \qquad
\sigma(pqU+1) = 1 + \sigma(pqU).
$$
The relations in~\eqref{omegapqU+r} can be adapted as well for numbers of the
form $pqU+r$ for $1 < r < pq$.
For $(p,q) = (2,3)$ the following Maple code can be used to
compute the first $500000$ values of $\sigma$ in approximately 1 second.

\small
\begin{verbatim}
s := proc(U)
option remember;  local r;
if U <= 2 then 1 else
   r := irem(U,6);
   if r=0 then min(s(U/3), s(U/2))
   elif r=1 then 1 + s(U-1)
   elif r=2 then s(U/2)
   elif r=3 then min(s(U/3), 1+s((U-1)/2))
   elif r=4 then min(s(U/2), 1+s((U-1)/3))
   elif r=5 then 1 + s((U-1)/2) 
   fi
fi
end:
\end{verbatim}
\normalsize As we shall see in Section~\ref{sec:case_p2}, if $(p,q)=(2,3)$ there
is only one partition for numbers of the form $2^a3-1$, so that
$\sigma(2^a3-1)=a+1$, whereas $\sigma(2^a3)=1$. Our numerical experiments
suggest that $\sigma(U) \approx (\log_2{U})/4$ on average, which empirically
confirms an intuition that $\sigma(U)$ is unfortunately not in $o(\log{U})$,
even in a loose sense.
 
It is perhaps possible to significantly reduce the minimum number of parts by
considering \emph{signed} strictly chained partitions. For instance, if one
allows the largest part in any $\varpi \in \Omega(U)$ to be less than or equal
to $U+1$, while allowing the other parts to be either added or subtracted, what
is the length of the shortest such partition(s) of $U$? Studying similar relaxed
definitions of chained partitions for the same purpose shall be the object of further research.

% -------------------------------------------------------------------------------

\section{Computing $W(U)$}
\label{sec:computing_W}

In this section we provide different formulas to compute $W(U)$, the cardinality of
$\Omega(U)$, with a special attention given to the case $p=2$.

\subsection{Simple identities}
\label{sec:simple_id}

From Lemma~\ref{lemomega} it follows immediately that, for all $U \in \n$,
we have
\begin{align}
  W(U)   &= W^*(U) + W^*(U-1), \\
  W^*(U) &= W(U/p) + W(U/q) - W(U/pq). 
\end{align}
Corollary~\ref{coromega} and Proposition~\ref{propomegap2} are easily translated
in the same way.  Simply note that the sets involved in the first two cases
of~\eqref{omegapqU+r} are disjoint.  Let $k_0=p^{-1} \bmod{q}$ and
$\ell_0=q^{-1} \bmod{p}$.  Then, for all $U \in \n$, we have
\begin{equation}
  \label{WpqU}
  W(pqU) = W(pqU+1) = W(pU) + W(qU) - W(U),
\end{equation}
and, for $1 < r < pq$,
\begin{equation}
  \label{WpqU+r}
  W(pqU+r) =
  \begin{cases}
    W(qU+k_0)+W(pU+p-\ell_0) & \text{if } r = k_0p\\[6pt]
    W(pU+\ell_0)+W(qU+q-k_0) & \text{if } r = \ell_0q\\[6pt]
    W(qU+k) &
    \text{if } r=kp,\,k\neq k_0\\
    & \text{or } r=kp+1,\,k\neq q-k_0\\[6pt]
    W(pU+\ell) &
    \text{if } r=\ell q,\,\ell\neq \ell_0 \\
    &\text{or } r=\ell q+1,\,\ell\neq p-\ell_0\\[6pt]
    0 & \text{otherwise}.
  \end{cases}
\end{equation}

The case $p=2$ allows for further simplifications. 

\begin{prop} If $p=2$, then for all $U \in \n$ we have
\begin{align}
  \label{WqU}
  W(qU) &= W(U) + W(qU-1),\\[0.5em]
  \label{WqU+1}
  W(qU+1) &= \left\{
    \begin{array}{ll}
      W(U) + W \left(\dfrac{qU}{2}-1 \right) & \text{for even }U,\\[1em]
      W(U) + W \left(\dfrac{qU+1}{2} \right) & \text{for odd }U,
    \end{array}
  \right.\\[0.5em]
  \label{WqU+r}
  W(qU+r) &= W\left(\left\lfloor\frac{qU+r}{2}\right\rfloor\right) \qquad\text{ if } 2
  \leq r \leq q-1.
\end{align}
\end{prop}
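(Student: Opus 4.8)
The plan is to specialize the general relations \eqref{WpqU} and \eqref{WpqU+r} to the case $p=2$, and then use Proposition~\ref{propomegap2} (in its counting form) to eliminate the subtraction that appears in the general formula. Throughout, write $q$ for the odd prime relatively prime to $2$, so that $k_0 = 2^{-1}\bmod q = (q+1)/2$ and $\ell_0 = q^{-1}\bmod 2 = 1$.

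First I would establish \eqref{WqU}. This is just the counting translation of Proposition~\ref{propomegap2}: since the three maps are injective and the union $\Omega(qU) = {}^q\Omega(U) + {}^1\Omega(qU-1)$ is disjoint, taking cardinalities gives $W(qU) = W(U) + W(qU-1)$ directly, with no case analysis needed. This is the cleanest of the three and I would dispatch it first.

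Next I would handle \eqref{WqU+1}, which is the main obstacle because the parity of $U$ controls which branch of the general relation~\eqref{omegapqU+r} applies. With $p=2$, a number of the form $qU+1$ must be matched against the residue cases modulo $2q$. The idea is to write the target as $2q U' + r$ for the appropriate $U'$ and residue $r$, and identify whether $r = k_0 p = q+1$, $r = \ell_0 q + 1 = q+1$, or one of the generic cases; the coincidence $k_0 p = q+1$ is exactly the degenerate first/second case of the Corollary, which is why the two subformulas differ. Concretely, for even $U = 2V$ one has $qU+1 = 2(qV) + 1 = 2q V + 1$, a "$kp+1$" case, and for odd $U$ one has $qU+1$ even, forcing a different decomposition; in each case I expect the floor expressions $qU/2 - 1$ and $(qU+1)/2$ to emerge after substituting $k_0 = (q+1)/2$ and simplifying. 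The care here is purely bookkeeping of residues, but it is where sign and parity errors are easiest to make, so I would verify each branch against \eqref{WqU} as a consistency check.

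Finally, for \eqref{WqU+r} with $2 \le r \le q-1$, I would argue that $r$ is too small to equal either of the distinguished values $k_0 p = q+1$ or $\ell_0 q = q$, so only the generic cases of~\eqref{WpqU+r} can occur. Since $p = 2$, every residue in this range is either of the form $\ell q = r$ (impossible, as $q > r$) or reduces via the $r = kp$ and $r = kp+1$ cases to a single term $W(qU' + k)$; collecting these and rewriting $qU' + k$ as $\lfloor (qU+r)/2 \rfloor$ yields the claim. The key observation making this uniform is that for $p=2$ the ``$\ell q$'' and ``$\ell q + 1$'' cases cannot arise in this residue window, so no subtraction survives and the result is a single floor. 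I would present all three parts together and note that \eqref{WqU+1} is the only one requiring genuine case analysis.
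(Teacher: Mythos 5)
Your overall strategy is exactly the paper's: obtain \eqref{WqU} by counting the disjoint union in Proposition~\ref{propomegap2}, specialize \eqref{WpqU} and \eqref{WpqU+r} to $p=2$ (so $k_0=(q+1)/2$ and $\ell_0=1$), and split according to the parity of $U$. The odd-$U$ branch of \eqref{WqU+1} is indeed the degenerate case $r=q+1=k_0p$ of \eqref{WpqU+r}: for $U=2V+1$ one gets $W(qU+1)=W\bigl(qV+\tfrac{q+1}{2}\bigr)+W(2V+1)=W\bigl(\tfrac{qU+1}{2}\bigr)+W(U)$, as in the paper. Your treatment of \eqref{WqU+r} is also sound, except that your exclusion ``$r$ is too small to equal $q$ or $q+1$'' only covers even $U$: for odd $U$ the residue modulo $2q$ is $q+r\in[q+2,\,2q-1]$, not $r$, and the distinguished values are avoided from above since $q+r>q+1$. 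That omission is trivially repaired by the same parity split you use elsewhere.

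The genuine flaw is in your even-$U$ branch of \eqref{WqU+1}. Writing $qU+1=2qV+1$ for $U=2V$ gives residue $r=1$, and you call this ``a $kp+1$ case'': it is not, because \eqref{WpqU+r}, like \eqref{omegapqU+r}, is stated only for $1<r<pq$, and executing the generic branch with $k=0$ would yield $W(qU+1)=W(qV)$, which is false --- for $(p,q)=(2,3)$ and $U=4$ one checks $W(13)=3$ (partitions $8+4+1$, $12+1$, $9+3+1$) while $W(qV)=W(6)=2$. The residue $r=1$ belongs instead to \eqref{WpqU}, whose subtraction is the one your opening plan promises to eliminate: $W(2qV+1)=W(2qV)=W(2V)+W(qV)-W(V)$ by \eqref{WpqU}, and then $W(qV)-W(V)=W(qV-1)$ by the already-proved \eqref{WqU}, giving $W(qU+1)=W(U)+W\bigl(\tfrac{qU}{2}-1\bigr)$ as claimed. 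This is precisely the paper's closing step (``\eqref{WqU+1} for even $U$ follows from \eqref{WpqU} and \eqref{WqU}''); with that branch corrected, and the parity point above made explicit, your proof coincides with the paper's.
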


\begin{proof}
Relation~\eqref{WqU} follows from Prop.~\ref{propomegap2} at once.
 Now, since $p=2$ we have $\ell_0=1$ and $k_0= \frac{q+1}{2}$. Then, for $1<r<2q$,
 relations given in~\eqref{WpqU+r} read 
\begin{align}
  \label{W2qU+r}
  W(2qU+r) &= \left\{
    \begin{array}{ll}
      W(qU+\frac{q+1}2)+W(2U+1) & \mathrm{if\ } r=q+1\\[4pt]
      W(2U+1)+W(qU+\frac{q-1}2) & \mathrm{if\ } r=q\\[4pt]
      W(qU+k) &
      \left|
        \begin{array}{l}
          \mathrm{if\ } r=2k,\,k\neq \frac{q+1}2\\[4pt]
          \mathrm{or\ }r=2k+1,\,k\neq \frac{q-1}2
        \end{array}
      \right.
    \end{array}
  \right.
\end{align}
The last relation is summarized in~\eqref{WqU+r} by considering odd and even
$U$. The second one is a particular case of~\eqref{WqU}, and the first one is
identical to \eqref{WqU+1} for odd $U$. 
Indeed, according to~\eqref{W2qU+r} we have $W(qU+\frac{q-1}2)=W(2qU+q-1)$ and
$W(qU+\frac{q+1}2)=W(2qU+q+2)$.
Finally,~\eqref{WqU+1} for even $U$ follows from~\eqref{WpqU} and~\eqref{WqU}.
\end{proof}

Note that if $(p,q)=(2,3)$ relations~\eqref{WqU+1} may be summarized as
\begin{equation}
  \label{eq:W3U1}
  W(3U+1) = W(U) + W\left(3\left\lfloor\frac{U+1}{2}\right\rfloor - 1 \right).  
\end{equation}

\subsection{A general relation}
\label{sec:gen_rel}

Let $W_{p}(U)$ be the number of partitions of $U$ with distinct parts taken in
$\{p^n,n \in \n\}$.  Clearly $W_p(U) \in \{0,1\}$, depending whether or not $U$
can be written in base $p$ with digits in $\{0,1\}$ only.  As an example,
Erd\"os and Graham conjectured~\cite{ErdGra80} that the only powers of 2
  such that $W_3(2^n) = 1$ are 1, 4 and 256. (This has been verified by
  Vardi~\cite{Vardi91} up to $2 \times 3^{20}$.) Of course, since the binary
expansion of $U$ is unique, we have $W_2(U) = 1$ for all $U > 0$.  It is easy to
see that $W_{p}(kp+1)=W_{p}(kp)=W_{p}(k)$ and $W_{p}(kp+r)=0$ if $r \bmod{p}
\not\in\{0,1\}$. Together with the initial condition $W_{p}(0)=1$, these
relations define $W_{p}$ recursively.  Those quantities are involved in the
following theorem.

\begin{theo}
  \label{thWpq}
  If $U \not \in \n$, then $W(U) = 0$. Otherwise, for $U \geq 1$, we have 
  \begin{gather}
    \label{eq:recdir}
    W(U) = W_p(U) + W\left(\frac{U}{q}\right) +
    \sum_{c=0}^{\left\lfloor\log_{p}(\frac{U}{q+1})\right\rfloor}
    \delta_{p,q}(c,U)\,W\left(\left\lfloor\frac{U}{p^{c}q}\right\rfloor\right),\\[1em]
    \text{where }
    \label{eq:deltapq}
    \delta_{p,q}(c,U) = 
    \left\{
      \begin{array}{ll}
        1 & \text{if } \left\lfloor U/p^c \right\rfloor \equiv 1\pmod{q}
        \text{\; and\; }
        W_p(U \bmod{p^c}) = 1\\[0.5em]
        0 & \text{otherwise}
      \end{array}
    \right.
  \end{gather} 
\end{theo}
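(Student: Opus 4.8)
The plan is to prove \eqref{eq:recdir} by a direct bijective decomposition of $\Omega(U)$ according to which parts are divisible by $q$. The case $U\notin\n$ is the standing convention $W(U)=0$, so assume $U\ge 1$. Since $p\wedge q=1$, a part $p^aq^b$ is divisible by $q$ if and only if $b\ge 1$; I call the parts with $b=0$ the \emph{pure} parts (they are powers of $p$) and the others the \emph{$q$-parts}. Because the exponents of the parts of any $\varpi\in\Omega(U)$ form a chain in $\n^2$, a pure part $p^e$ and a $q$-part $p^aq^b$ can only be comparable through $p^e\mid p^aq^b$, so every pure part divides every $q$-part and the pure parts are exactly the smallest ones. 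I then split $\Omega(U)$ into three disjoint, exhaustive classes: those with only pure parts, those with only $q$-parts, and the mixed ones. The first class is counted by $W_p(U)$, and the second is precisely the image ${}^q\Omega(U/q)$, counted by $W(U/q)$ (with $W(U/q)=0$ when $q\nmid U$); these furnish the first two terms of \eqref{eq:recdir}.

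For the main term I would index the mixed partitions by $c=\max\{a\;;\;p^a\text{ is a pure part of }\varpi\}$, the exponent of the largest pure part $p^c$. First I show that $p^c$ divides the smallest $q$-part, hence divides every $q$-part; consequently the sum $V$ of all $q$-parts is a multiple of $p^cq$, say $V=p^cqM'$, while the remaining pure parts have exponents $<c$ and sum to $U-p^c-V$. This yields the key identity $U=(U\bmod{p^c})+p^c+p^cqM'$, from which $U\bmod{p^c}$ is exactly the sum of the low pure parts, $M'=\lfloor U/(p^cq)\rfloor$, and $\lfloor U/p^c\rfloor=qM'+1\equiv 1\pmod{q}$. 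Since the low pure parts are distinct powers of $p$ below $p^c$, we also have $W_p(U\bmod{p^c})=1$. These are exactly the two conditions defining $\delta_{p,q}(c,U)=1$.

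Conversely, given $c$ with $\delta_{p,q}(c,U)=1$ and any $\psi\in\Omega(M')$, I would reconstruct $\varpi$ from the pure parts read off the base-$p$ digits of $U\bmod{p^c}$, the extra part $p^c$, and the $q$-parts $p^cq\cdot(\text{parts of }\psi)$, checking distinctness and the chain order: the powers $p^e$ with $e<c$ together with $p^c$ form an increasing divisibility chain, and $p^c$ divides the smallest $q$-part because scaling by $p^cq$ only raises exponents. This gives a bijection between the mixed partitions with parameter $c$ and $\Omega(\lfloor U/(p^cq)\rfloor)$, so this class contributes $\delta_{p,q}(c,U)\,W(\lfloor U/(p^cq)\rfloor)$. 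Summing over $c$, and noting that a mixed partition has at least one $q$-part, so $M'\ge 1$ and $U=(U\bmod{p^c})+p^c+p^cqM'\ge p^c(q+1)$, pins down the summation range $0\le c\le\lfloor\log_p(U/(q+1))\rfloor$ and completes the argument.

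The main obstacle will be the junction between the two blocks: proving rigorously that the largest pure part $p^c$ divides the smallest $q$-part, and therefore every $q$-part, which is what makes $U\bmod{p^c}$ equal the sum of the low pure parts and forces $V$ to be divisible by $p^cq$. Once this single divisibility is secured, the remaining verifications—distinctness, the chain order within each block, the disjointness and exhaustiveness of the three classes, and the identification of the two arithmetic conditions with $\delta_{p,q}$—are all routine.
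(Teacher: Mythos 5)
Your proposal is correct and takes essentially the same route as the paper: both proofs decompose each partition into its pure $p$-power parts and its remaining parts, observe that the latter are all multiples of $p^{c}q$ where $p^{c}$ is the largest pure part, and divide by $p^{c}q$ to get the bijection with $\Omega(\lfloor U/(p^{c}q)\rfloor)$ whose solvability conditions are exactly $\delta_{p,q}(c,U)=1$. The only differences are bookkeeping: the paper sums over the $p$-ary amount $n$ weighted by $W_p(n)$ and then collapses each block $[p^{c},p^{c+1})$ to at most one term (recovering $W_p(U)$ as the surviving boundary term $n=U$), whereas you index by $c$ directly, force the pure parts from $U \bmod p^{c}$ and $p^{c}$, and split off the pure-only class upfront.
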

\begin{proof} Let us sort the partitions in $\Omega(U)$ with respect to their $p$-ary
amount, that is, the sum of all parts of the form $p^a$.  There are $W(U/q)$
such partitions with $p$-ary amount equal to 0.  This occurs either when there
is none or when $q$ divides all parts.  Next, note that a partition $\varpi$
in $\Omega(U)$ with $p$-ary amount $n \geq 1$, assuming it exists, is perfectly
described by its non $p$-ary parts, i.e., parts of the form $p^aq^b$ with $b >
0$.  Indeed, if $c_{n}=\left\lfloor\log_{p}(n)\right\rfloor$ denotes the largest
power of $p$ among the $p$-ary parts of $\varpi$, each of the non $p$-ary parts
of $\varpi$ is clearly a multiple of $p^{c_{n}}q$, so that dividing each of them
by $p^{c_{n}}q$ yields a characterizing partition of $(U-n)/(p^{c_{n}}q)$. Since
this correspondence is clearly one-to-one and onto
$\Omega\left((U-n)/(p^{c_{n}}q)\right)$, the number of partitions in $\Omega(U)$
with $p$-ary amount equal to $n$ is exactly $W\left((U-n)/(p^{c_{n}}q)\right)$.
Finally, $n$ is a $p$-ary amount if and only if $W_p(n)=1$. Accordingly,
\begin{equation*}
  W(U) = W(U/q)+\sum_{n=1}^{U}W_{p}(n)\,W\left(\frac{U-n}{p^{c_{n}}q}\right).
\end{equation*}
Splitting the interval $[1,U]$ into sub-intervals of the form $[p^c, p^{c+1})$
for $c \geq 0$, yields
\begin{equation}
  \label{eq:rec}
  W(U) = W(U/q) + \sum_{c \geq 0} \sum_{n=p^{c}}^{p^{c+1}-1}
  W_{p}(n)\,W\left(\frac{U-n}{p^{c}q}\right).
\end{equation}
If $p^c \leq n < p^{c+1}$ then we have $U-n\geq p^cq$ only if $U \geq
(q+1)p^{c}$, so that $c$-summands vanish for $c > \log_{p}(U/(q+1))$ except for
the term $W_p(U)\,W(0)=W_p(U)$.

Finally, let $c\leq \log_{p}(U/(q+1))$. We have $n \in [p^c,p^{c+1})$ and
$W_{p}(n)=1$ if and only if $n=p^c+r$ with $0\leq r<p^c$ and $W_p(r)=1$. Thus
\begin{equation*}
  \sum_{n=p^{c}}^{p^{c+1}-1} W_{p}(n)\,W\left(\frac{U-n}{p^{c}q}\right)
  =\sum_{r=0}^{p^{c}-1} W_{p}(r)\,W\left(\frac{U-p^c-r}{p^{c}q}\right). 
\end{equation*}
Moreover, $p^cq$ divides $U-p^c-r$ with $0\leq r<p^c$ only if $U \bmod{p^cq}=p^c+r$. Therefore,
\begin{equation*}
  \sum_{r=0}^{p^{c}-1} W_{p}(r)\,W\left(\frac{U-p^c-r}{p^{c}q}\right)=
  \delta_{p,q}(c,U)\,W\left(\left\lfloor\frac{U}{p^{c}q}\right\rfloor\right), 
\end{equation*}
where $\delta_{p,q}(c,U)$ is equal to 1 if $r=U \bmod{p^cq}-p^c$ satisfies $0\leq
r<p^c$ and $W_p(r)=1$, otherwise it vanishes. Writing $U=kp^cq+p^c+r$, it is not
difficult to show that this definition of $\delta_{p,q}(c,U)$ is equivalent to
the one given in \eqref{eq:deltapq}.  \end{proof}

It is worthwhile to point out that if $W_p(U\bmod p^c)=0$ holds for a given $c$
it also holds for all $c'\geq c$. In other words, it means that the number of
$c$-summands in the r.h.s. of~\eqref{eq:recdir} is at most equal to the the
weight of the first digit (starting from the unit) greater than 1 in the
expansion of $U$ in base $p$.  Of course, the latter remark is useless if $p=2$.
Nevertheless, even in this case many summands do vanish, as shown next.

\begin{prop}
  \label{few}
  Assume $p<q$ and let $N=\lfloor\log_p(q-\frac{q-1}p)\rfloor$. \\[.5em]
  For all $U$ and $c$ in $\n$, if $\delta_{p,q}(c,U)=1$ then
  $\delta_{p,q}(c+k,U)=0$ for all integers $k$ such that $k\geq-c$ and
  $0<|k|\leq N$.
\end{prop}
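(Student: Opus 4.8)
The plan is to recast the hypothesis $\delta_{p,q}(c,U)=1$ purely in terms of the base-$p$ expansion of $U$. Writing $U=\sum_{i\ge0}d_ip^i$ with digits $d_i\in\{0,\dots,p-1\}$, the requirement $W_p(U\bmod p^c)=1$ says precisely that the low-order digits $d_0,\dots,d_{c-1}$ all lie in $\{0,1\}$, while the condition $\lfloor U/p^c\rfloor\equiv1\pmod q$ is a congruence on the high-order part $\sum_{j\ge0}d_{c+j}p^j$. This reformulation is what makes the two indices easy to compare.

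Next I would argue by contradiction. Suppose $\delta_{p,q}(c,U)=1$ and also $\delta_{p,q}(c+k,U)=1$ for some $k$ with $c+k\ge0$ and $0<|k|\le N$. Let $c_1<c_2$ denote the two indices $c$ and $c+k$, and set $m=c_2-c_1=|k|\in\{1,\dots,N\}$. The $W_p$-condition at the larger index $c_2$ (which subsumes the one at $c_1$) shows that the intermediate digits $d_{c_1},\dots,d_{c_2-1}$ all lie in $\{0,1\}$; hence the number $s:=\lfloor U/p^{c_1}\rfloor\bmod p^m=\sum_{i=0}^{m-1}d_{c_1+i}p^i$ has only $0/1$ digits and satisfies $0\le s\le\frac{p^m-1}{p-1}$. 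Using the identity $\lfloor U/p^{c_1}\rfloor=\lfloor U/p^{c_2}\rfloor\,p^m+s$ together with the two congruences $\lfloor U/p^{c_1}\rfloor\equiv\lfloor U/p^{c_2}\rfloor\equiv1\pmod q$, I would deduce $s+p^m\equiv1\pmod q$.

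It then remains to show this is impossible for $m\le N$. Setting $v=s+p^m$, the digit bound gives $p^m\le v\le1+p+\dots+p^m=\frac{p^{m+1}-1}{p-1}$. The crux is to unwind the definition of $N$: from $N\le\log_p\!\big(q-\tfrac{q-1}{p}\big)$ one obtains $p^{N+1}\le q(p-1)+1$, that is $\frac{p^{N+1}-1}{p-1}\le q$. Consequently, for $1\le m\le N$ we have $2\le p\le p^m\le v\le\frac{p^{m+1}-1}{p-1}\le q$, so $v$ lies in the interval $[2,q]$. But the integers congruent to $1$ modulo $q$ nearest that interval are $1$ and $q+1$, both strictly outside it, so no such $v$ can satisfy $v\equiv1\pmod q$. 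This contradiction finishes the argument, and the two sub-cases $k>0$ and $k<0$ are handled identically once the indices are relabelled as $c_1<c_2$.

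I expect the main obstacle to be bookkeeping the definition of $N$ correctly and verifying that the inequality $\frac{p^{N+1}-1}{p-1}\le q$ is exactly sharp, since it is precisely this threshold that pins down $N$ and explains why the range of forbidden shifts cannot be enlarged; everything else is elementary once the base-$p$ reformulation and the bound on $s$ are in place.
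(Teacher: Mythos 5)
Your proof is correct and follows essentially the same route as the paper's: both decompose $\lfloor U/p^{c_1}\rfloor=\lfloor U/p^{c_2}\rfloor p^{m}+s$ in base $p$, use the $W_p$-condition at the larger index to force the intermediate digits into $\{0,1\}$, and rule out $p^{m}+s\equiv 1\pmod{q}$ because $2\leq p^{m}+s\leq\frac{p^{m+1}-1}{p-1}\leq q$ for $m\leq N$. Your only departures are cosmetic: you handle both signs of $k$ at once via a symmetric contradiction (the paper proves the downward case directly and gets the upward case by contradiction), and you make explicit the threshold inequality $p^{N+1}\leq q(p-1)+1$ that the paper leaves as ``it is easy to see.''
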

\begin{proof} 
Let us write
$$
  U = (u_n, \dots, u_{c}, u_{c-1}, u_{c-2}, \dots, u_1, u_0)_p = \lfloor U/p^{c} \rfloor p^{c} + U
  \bmod{p^{c}}.
$$
If $\delta_{p,q}(c,U)=1$, Theorem~\ref{thWpq} ensures that there exists $\ell$
such that $\lfloor U/p^{c} \rfloor = \ell q + 1$. We thus have, for each $1\leq
k\leq c$,
\begin{align*}
  \lfloor U/p^{c-k} \rfloor = p^{k}(\ell q+1) + \sum_{i=1}^k p^{k-i}u_{c-i}. 
\end{align*}
Moreover, each $u_i$ with $i<c$ is either equal to 0 or to 1 since $W_p(U \bmod{p^c}) =
1$. Accordingly, $p^{k}+ \sum_{i=1}^k p^{k-i}u_{c-i}$ is an integer in $\left[
  p^k,\frac{p^{k+1}-1}{p-1} \right]$.  Therefore, if we moreover assume $k\leq N$, then it is easy
to see that $\lfloor U/p^{c-k} \rfloor$ cannot be equal to 1 modulo $q$, so that
$\delta_{p,q}(c-k,U)=0$.  Finally, suppose there exists $k\in[1,N]$ such that
$\delta_{p,q}(c+k,U)=1$. Then, according to the above discussion, $\delta_{p,q}(c,U)$ should then be
equal to 0, which contradicts our hypothesis. \end{proof}

As an example, consider $(p,q)=(2,3)$.  Here  $N=1$ thus at most one of two
consecutive summands is non-zero. An extremal case is $U=4^a$, because
$\delta_{2,3}(c,4^a)=1$ if, and only if, $c$ is even.

\section{The sequence $W$}
\label{sec:seqW}

\subsection{Generalities} 
\label{sec:generalities}

For any given pair $p,q$, the sequence $W$ behaves rather irregularly, as one is
easily convinced by computing its first values (see Fig.~\ref{fig:first400}).

\begin{figure}[htbp]
  \centering
  \includegraphics[width=\textwidth]{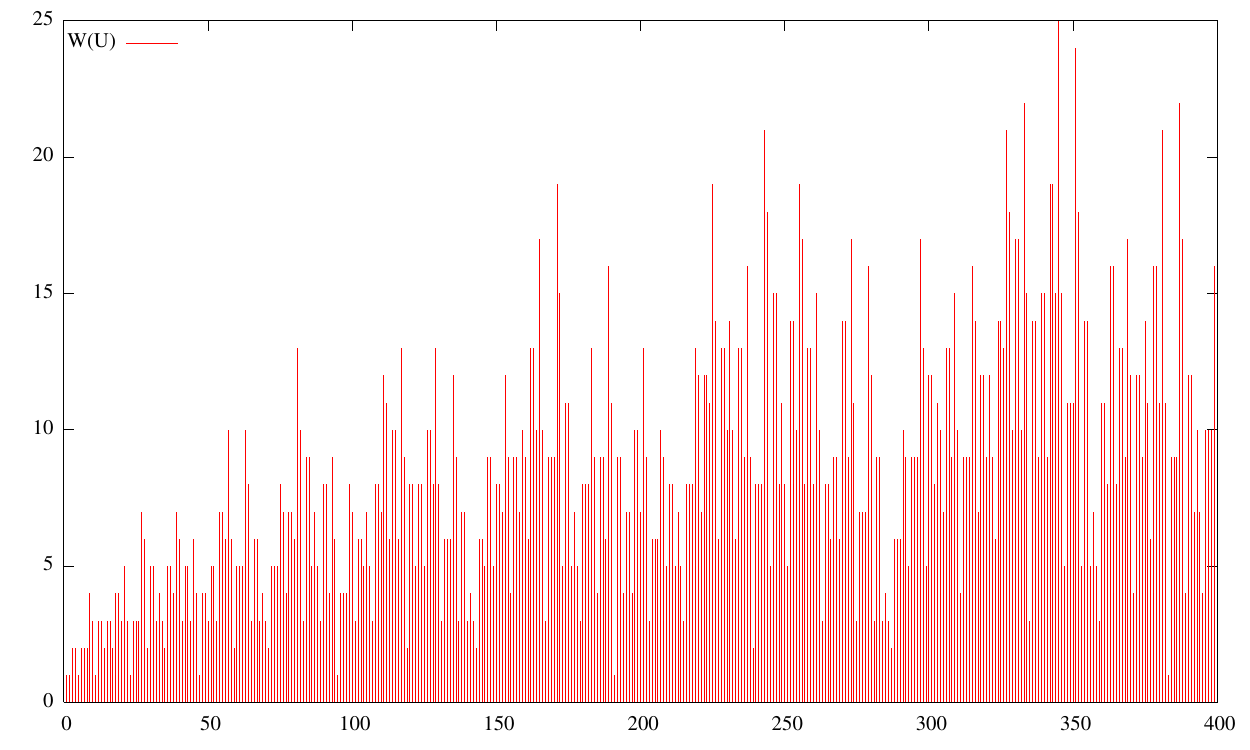}
  \caption{The first $400$ values of $W(U)$ for $(p,q) = (2,3)$}
  \label{fig:first400}
\end{figure}

On the one hand, if $\min(p,q)>2$, then $W$ takes infinitely often the value 0 
according to \eqref{WpqU+r}. In addition, it takes infinitely often the value 1 if $p=2$.  Indeed,
iterating \eqref{WqU+r} shows that, for all integers $U$ and $a$,
\begin{equation}
  \label{WqU-1red}
  W\left(2^{a}q(2U+1)-1\right)=W\left(qU+\frac{q-1}{2}\right). 
\end{equation}   
Therefore, for $p=2$, the sequence $W$ takes infinitely many times each value of
the form $W(qU+\frac{q-1}2)$, and in particular the value $1=W(\frac{q-1}2)$
since the binary partition is obviously the only $(2,q)$-ary partition for
  numbers less than $q$.  

On the other hand, we also have the following result. 
\begin{lemm}
The sequence $W$ is either $\{0,1\}$-valued or unbounded.
\end{lemm}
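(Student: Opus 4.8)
The plan is to prove the contrapositive: \emph{if $W(U_0)\ge 2$ for some $U_0$, then $W$ is unbounded}. The idea is to manufacture, out of any \emph{ordered pair} of partitions of $U_0$, a single strictly chained $(p,q)$-ary partition of a larger integer, in such a way that distinct pairs give distinct partitions. This squares the count in one stroke, and iterating sends it to infinity. I would first record that $\Omega(U_0)$ is finite (its parts are bounded by $U_0$ and distinct), so there is a largest exponent $A$ of $p$ and a largest exponent $B$ of $q$ occurring among the parts $p^aq^b$ of all partitions in $\Omega(U_0)$. I then set $s=p^{A+1}q^{B+1}$, a single scale larger than the whole ``box'' of exponents used by $\Omega(U_0)$.

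For an ordered pair $(\varpi',\varpi)\in\Omega(U_0)^2$, I would form the sequence consisting of the parts of $\varpi'$ together with the parts of $s\varpi$, where $s\varpi$ denotes the partition obtained from $\varpi$ by multiplying every part by $s$ (the iterate of the scaling maps already used to define $^p\Omega$ and $^q\Omega$, so that $s\varpi\in\Omega(sU_0)$). The next step is to check that this sequence genuinely lies in $\Omega\big((1+s)U_0\big)$. It is a partition of $U_0+sU_0$; its parts are distinct because every part coming from $\varpi'$ has $p$-exponent at most $A$, whereas every part $p^{A+1+a}q^{B+1+b}$ coming from $s\varpi$ has $p$-exponent at least $A+1$; and it is a divisibility chain because $\varpi'$ and $s\varpi$ are each chains, while the top part $p^{a'}q^{b'}$ of $\varpi'$ (with $a'\le A$, $b'\le B$) divides the smallest part of $s\varpi$ (whose exponents are at least $A+1$ and $B+1$), hence divides every part of $s\varpi$. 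The same exponent gap makes the correspondence injective: from the resulting partition one recovers $\varpi'$ as the parts with $p$-exponent $\le A$, and $\varpi$ by dividing the remaining parts by $s$. This yields $W\big((1+s)U_0\big)\ge W(U_0)^2$.

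With this inequality in hand the conclusion is immediate. Starting from $U_0$ with $W(U_0)\ge 2$, I apply the construction repeatedly: having produced $U_n$ with $W(U_n)\ge 2$, I let $s_n=p^{A_n+1}q^{B_n+1}$ be the scale associated with $\Omega(U_n)$ and set $U_{n+1}=(1+s_n)U_n$, so that $W(U_{n+1})\ge W(U_n)^2$. Then $W(U_n)\ge 2^{\,2^{\,n}}\to\infty$, so $W$ is unbounded, as desired. Note that this argument never uses $\min(p,q)>2$, so it settles all cases uniformly. The only delicate point — and the step I expect to be the main, though essentially routine, obstacle — is the exponent bookkeeping that simultaneously guarantees distinctness of parts and the chain condition for \emph{every} pair at once; committing in advance to the single scale $s=p^{A+1}q^{B+1}$, chosen to exceed the entire exponent box of $\Omega(U_0)$, is precisely what lets one uniform construction serve all pairs. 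Everything else is formal.
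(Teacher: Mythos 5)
Your proof is correct and follows essentially the same route as the paper: both exploit the lattice picture by multiplying one partition by a large $p^cq^d$ so that its exponent box lies strictly above the other's, concatenating the two chains, and iterating on $U_{n+1}=(1+p^cq^d)U_n$. Your version merely sharpens the paper's sketch (which pairs the growing family with the two original partitions to get $W(U_n)\geq 2^n$) by combining all ordered pairs at a uniform scale, yielding the squaring bound $W(U_{n+1})\geq W(U_n)^2$ — a quantitative improvement, but the same underlying construction.
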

\begin{proof} Assume there exists $U$ such that $W(U)>1$. Choose two chained partitions
of $U$, with respective greatest parts $p^aq^b$ and $p^{a'}q^{b'}$, and let
$c=\max(a,a')$ and $d=\max(b,b')$. Next construct a sequence $(U_n)$ as follows:
$U_1=U$, and $U_{n+1}=(1+p^{nc}q^{nd})U_n$ for positive $n$. Remembering the
lattice representation of chained partitions, see Fig.~\ref{figlattice}, it is
easy to show by induction that $W(U_n)\geq 2^n$.  \end{proof} 

For the time being, we are not aware of any pair $(p,q)$ for which $W$ is
$\{0,1\}$-valued.  This cannot of course happen if $q$ writes in base $p$
with digits $\in\{0,1\}$ only, in particular if $p=2$. More generally, $W$ is
unbounded as soon as there exists $U$ that writes in both bases $p$ and $q$ with
digits $\in\{0,1\}$ only. But whether such an $U$ always exists or not is a
seemingly difficult problem, and only a positive answer would also solve ours.

Let us next give a coarse majoration of $W(U)$. 
\begin{prop}\label{majW}
Let $\beta\in(0,1)$ be the unique solution of $1/p^\beta +1/q^\beta=1$. Then
$W(U)\leq U^\beta $ for $U\geq 1$. 

\end{prop}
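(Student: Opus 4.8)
The plan is to prove the bound by strong induction on $U$, feeding the two identities of Section~\ref{sec:simple_id} (which are just Lemma~\ref{lemomega} translated to cardinalities) into one another. Recall that these read $W(V)=W^*(V)+W^*(V-1)$ and $W^*(V)=W(V/p)+W(V/q)-W(V/pq)$, with the convention $W(V)=0$ for $V\notin\n$. First I would dispose of the base case $U=1$, where $W(1)=1=1^\beta$ so equality holds. For the inductive step I assume $W(V)\le V^\beta$ for every integer $1\le V<U$ and substitute the second identity (applied at $V=U$ and at $V=U-1$) into the first, regrouping by the prime used to divide:
\[
W(U)=\bigl(W(U/p)+W((U-1)/p)\bigr)+\bigl(W(U/q)+W((U-1)/q)\bigr)-W(U/pq)-W((U-1)/pq).
\]

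The key observation is that, since $p\wedge q=1$ and $p,q\ge 2$, the integers $U$ and $U-1$ are \emph{never simultaneously divisible} by $p$ (their difference is $1$), and likewise for $q$. Hence in the first grouped pair at most one of $W(U/p)$, $W((U-1)/p)$ is nonzero, and whichever argument is an integer is at most $U/p$; using the inductive hypothesis and the fact that $x\mapsto x^\beta$ is increasing for $\beta>0$, I get $W(U/p)+W((U-1)/p)\le (U/p)^\beta$, and symmetrically $W(U/q)+W((U-1)/q)\le (U/q)^\beta$. Dropping the two subtracted terms (legitimate since $W\ge 0$) and invoking the defining equation $p^{-\beta}+q^{-\beta}=1$ of $\beta$ closes the induction:
\[
W(U)\le (U/p)^\beta+(U/q)^\beta=U^\beta\bigl(p^{-\beta}+q^{-\beta}\bigr)=U^\beta.
\]

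The one delicate point, and the only place the argument needs care, is the boundary convention $W(0)=1$, which itself violates the claimed inequality since $0^\beta=0$. This does no harm because the argument $0$ can only arise through the numerator $U-1$ when $U=1$, a case already handled as the base; for every $U\ge 2$ all the arguments $U/p,(U-1)/p,U/q,(U-1)/q$ that are integers are automatically $\ge 1$, so the inductive hypothesis applies to each of them. I would also note at the outset, for completeness, that $\beta$ is well defined: $s\mapsto p^{-s}+q^{-s}$ is strictly decreasing with value $2$ at $s=0$ and value $1/p+1/q\le 5/6<1$ at $s=1$ (as $p,q$ coprime and $\ge 2$ cannot both equal $2$), so a unique root $\beta\in(0,1)$ exists. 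No step requires any genuine computation, so there is no serious obstacle beyond bookkeeping the divisibility parity of consecutive integers.
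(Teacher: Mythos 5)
Your proof is correct, but it takes a genuinely cleaner route than the paper's. The paper proves Proposition~\ref{majW} by assuming $p<q$ and inducting over the intervals $[1,np]$: it writes $U=pqV+r$ and runs through the mod-$pq$ case analysis of \eqref{WpqU} and \eqref{WpqU+r}, observing in each case that the arguments on the right are non-integers or lie in $[1,U/p]$, handling $r=1$ separately via $W(pqV+1)=W(pqV)$ (where divisibility by $p$ forces $U-1=mp$), and closing each case with the defining equation of $\beta$. You instead go back one step, to the two displayed identities of Section~\ref{sec:simple_id} ($W(U)=W^*(U)+W^*(U-1)$ and $W^*(V)=W(V/p)+W(V/q)-W(V/pq)$), and collapse the entire case analysis with a single observation: consecutive integers cannot both be divisible by $p$ (nor both by $q$), so each grouped pair contributes at most $(U/p)^\beta$ resp.\ $(U/q)^\beta$, dropping the subtracted terms is harmless since $W\geq 0$, and $p^{-\beta}+q^{-\beta}=1$ closes a plain strong induction. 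Your flagged delicate point is the right one and is handled correctly: the convention $W(0)=W^*(0)=1$ would violate the bound, but argument $0$ among the retained terms occurs only at $U=1$, which is the base case. Comparing what each approach buys: yours is shorter, symmetric in $p$ and $q$ (no loss-of-generality assumption), and free of the $k_0,\ell_0$ bookkeeping; the paper's heavier case-by-case structure is what enables the refinement it mentions immediately after the proof, namely feeding sharper per-residue relations such as \eqref{WqU+r} into the same induction to lower the exponent (to roughly $0.66$ for $(p,q)=(2,3)$), a gain invisible to your uniform two-term bound because it always pays both $(U/p)^\beta$ and $(U/q)^\beta$ even when only one term is actually present.
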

\begin{proof} Let us assume $p<q$ and show that, for all $n \in \n^*$, $W(U)\leq U^\beta
$ for $1\leq U\leq np$.  This is true for $n=1$ since for $1 \leq U \leq p$,
$W(U)\in\{0,1\}$ and $U^\beta \geq 1$. Suppose this is also true for $1 \leq n
\leq m$ and consider $U\in[mp+1,mp+p]$. Next write $U=pqV+r$ with $0\leq r\leq
pq$. If $r=1$ then by \eqref{WpqU} $W(U)=W(U-1)$ thus $W(U)\leq
(U-1)^\beta<U^\beta$ since $U-1=mp$. In the same way, if $r\neq 1$, first notice
that the arguments of $W$ in the r.h.s. of \eqref{WpqU} and \eqref{WpqU+r} are
either non-integers or lie in $[1,U/p]$, thus in $[1,mp]$ since $p\geq 2$ and
$m\geq 1$. It suffices then to consider the upper bounds for the terms in the
r.h.s.\ of~\eqref{WpqU} and~\eqref{WpqU+r} and to check that it also holds for
the l.h.s.\ term by definition of $\beta$. For example, if $r=0$, we get
from~\eqref{WpqU} that
$$
W(U)\leq W(U/p)+W(U/q)\leq (1/p^\beta +1/q^\beta)U^\beta=U^\beta. 
$$  
The other cases follow easily. \end{proof}

A direct consequence of the above is that for the particular case $(p,q) =
(2,3)$ we have $W(U) \leq U^{0.79}$. Note that our numerical computations suggest an
exponent close to $0.535$ for the best upper bound, whereas refining the above
method by taking into account~\eqref{WqU+r} only gives approximately $0.66$.

We further analyze and discuss the case $p=2$ in the next section.

\subsection{The case $p=2$}
\label{sec:case_p2}

Although $W$ behaves quite irregularly on the large scale, its local
variations obey some rules, with a main pattern of length $q$ when $p=2$.

\begin{theo}
  \label{th_ineqW}
  For all $U\in\n$, we have if $p=2$ 
  \begin{equation}
    \label{ineqW-1}
    W(qU) \geq W(qU+1) \geq W(qU-1). 
  \end{equation}
  Moreover, for $0 \leq r < q-1$, 
  \begin{equation}
    \label{ineqW}
    W(qU+r) \geq W(qU+r+1). 
  \end{equation}
\end{theo}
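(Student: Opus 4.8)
The plan is to unify the two displays \eqref{ineqW-1} and \eqref{ineqW} into a single two-sided estimate on consecutive differences. Writing $d(U,j):=W(qU+j)-W(qU+j+1)$, the inequalities \eqref{ineqW} together with the left half $W(qU)\ge W(qU+1)$ of \eqref{ineqW-1} are exactly the statement $d(U,j)\ge 0$ for $0\le j\le q-2$. For the right half $W(qU+1)\ge W(qU-1)$ I would first substitute $W(qU-1)=W(qU)-W(U)$, which is just \eqref{WqU}; the inequality then reads $W(U)\ge W(qU)-W(qU+1)=d(U,0)$. Thus the whole theorem is equivalent to the claim
$$0\le d(U,j)\le W(U)\qquad(0\le j\le q-2),$$
for all $U$, and I would prove this by strong induction on $U$ (here $q$ is odd and $\ge 3$).

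Before running the induction I would record two monotonicity facts valid for $p=2$: $W(2n)\ge W(n)$ and $W(2n+1)\ge W(n)$. The first is immediate from injectivity of ${}^{2}\Omega$, since ${}^{2}\Omega(n)\subset\Omega(2n)$; the second follows because doubling a partition of $n$ and then increasing its (now necessarily even) binary amount by $1$ embeds $\Omega(n)$ into $\Omega(2n+1)$ without breaking the chain condition. I would also use that $W(2qV)=W(2qV+1)$ by \eqref{WpqU}, i.e.\ $d(U,0)=0$ whenever $U$ is even.

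The inductive step splits according to $j$. For $2\le j\le q-2$, relation \eqref{WqU+r} turns $d(U,j)$ into $W(\lfloor(qU+j)/2\rfloor)-W(\lfloor(qU+j+1)/2\rfloor)$, which vanishes when $qU+j$ is even and otherwise equals $d(V',j')$, where $s=(qU+j-1)/2=qV'+j'$. A short congruence computation gives $q(U-2V')=2j'-j+1\in\{0,q\}$, hence $U\in\{2V',2V'+1\}$ and $V'=\lfloor U/2\rfloor<U$ (with $j'\le q-2$, so the reduction is to a legitimate index). For $j=0$ and odd $U$ the same mechanism yields $d(U,0)=d\big(\tfrac{U-1}{2},\tfrac{q-1}{2}\big)$, while for $j=1$ one computes, using \eqref{WqU+1}, \eqref{WqU+r} and \eqref{WqU}, that $d(U,1)=W(U)$ for odd $U$ (so the bound is attained) and $d(U,1)=W(U)-W(U/2)+d(U/2,0)$ for even $U$. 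In every case the problem is reduced to a difference at the strictly smaller block index $\lfloor U/2\rfloor$.

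The non-negativity half then follows at once from the inductive hypothesis, together with $W(U)\ge W(U/2)$ in the even $j=1$ case. The upper bound, which encodes the right half of \eqref{ineqW-1}, is the part I expect to be the main obstacle, and it is where the two recorded monotonicity facts are indispensable: the inductive hypothesis gives $d(V,j')\le W(V)$ at the reduced index $V=\lfloor U/2\rfloor$, and the crucial point is that the halving is \emph{exact}, so that $U\in\{2V,2V+1\}$ and hence $W(V)\le W(U)$ with no loss. A cruder estimate would fail here precisely because $W$ is far from monotone; what rescues the argument is that the reduced index is always exactly $\lfloor U/2\rfloor$. In the even $j=1$ case the upper bound reduces instead to $d(U/2,0)\le W(U/2)$, again the inductive hypothesis. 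The base case $U=0$ is trivial, since $W(j)=1$ for $0\le j\le q-1$ forces all $d(0,j)=0$. Finally, reading off $d(U,j)\ge 0$ and $d(U,0)\le W(U)$ recovers \eqref{ineqW} and both halves of \eqref{ineqW-1}.
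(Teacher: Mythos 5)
Your proof is correct, but it takes a genuinely different route from the paper's. The paper proves both inequalities by explicit injections between the partition sets themselves: for \eqref{ineqW} and the first half of \eqref{ineqW-1} it decreases the binary amount of a partition in $\Omega(qU+i)$ by $1$, which is legitimate because that amount is nonzero for $1\leq i\leq q-1$; for the second half it builds an injection $\Omega(qU-1)\to\Omega(qU+1)$ by adding $2$ to the binary amount $\beta$ when $\beta\neq 2^n-2$, and otherwise trading $\beta=2^n-2$ for the part $1$ together with the parts $q\cdot 2^i$ read off the binary expansion of $k=(2^n-1)/q$ (using $2^n\equiv 1\pmod q$), the two kinds of images being distinguished by their number of binary parts. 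You instead run a strong induction on $U$ through the recurrences \eqref{WqU}, \eqref{WqU+1}, \eqref{WqU+r} and \eqref{WpqU}, with the strengthened invariant $0\leq d(U,j)\leq W(U)$ for your difference $d(U,j)=W(qU+j)-W(qU+j+1)$, $0\leq j\leq q-2$ --- and the strengthening to all $j$ is exactly what makes the induction close, since the $j=0$ case for odd $U$ reduces to $j'=(q-1)/2$. I verified your reductions: the congruence argument does force $U\in\{2V',2V'+1\}$ with $j'\leq q-2$ (never $j'=q-1$, which would have broken nonnegativity, since $W(q(V'+1))-W(qV'+q-1)=W(V'+1)>0$); the identities $d(U,1)=W(U)$ for odd $U$ and $d(U,1)=W(U)-W(U/2)+d(U/2,0)$ for even $U$ are correct; and both embeddings $W(2n)\geq W(n)$ and $W(2n+1)\geq W(n)$ are valid, the latter because doubling makes the binary amount even, so incrementing it merely adjoins the part $1$ without disturbing the chain condition. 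As for what each approach buys: the paper's argument is shorter and combinatorially transparent, exhibiting the maps behind the inequalities; yours is more mechanical but uses only the already-established recurrences of Section~\ref{sec:computing_W}, and it actually proves strictly more than the theorem states --- the sharp two-sided bound $W(qU+j)-W(qU+j+1)\leq W(U)$ for \emph{every} $j$ in range (the theorem only needs $j=0$, in the disguised form $W(qU+1)\geq W(qU-1)$), with equality $W(qU+1)-W(qU+2)=W(U)$ for odd $U$, and exact vanishing of the difference whenever $qU+j$ is even with $2\leq j\leq q-2$ --- quantitative refinements that are not visible from the injections alone.
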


\begin{proof} For $1 \leq i \leq q-1$, the binary amount of any partition in
$\Omega(qU+i)$ is clearly not 0. It may thus be decreased by 1. This
transformation provides us with an injective mapping from $\Omega(qU+i)$ to
$\Omega(qU+i-1)$, so that \eqref{ineqW} and the first inequality
in~\eqref{ineqW-1} hold.

Next consider a partition $\varpi\in\Omega(qU-1)$ with a binary amount $\beta$
satisfying $\beta<2^N$. As already noticed, $\beta$ cannot be of the form
$2^n-1$ with $n\in\n$. If it is neither of the form $2^n-2$, it may be increased
by 2 to obtain a partition in $\Omega(qU+1)$ because $\beta+2$ is still less
than $2^N$. Note that the resulting partition has \emph{at least two} binary
parts.  Finally assume that $\beta=2^n-2$ with $n\geq 2$. Since $\varpi \in
\Omega(qU-1)$, we have $\beta \equiv -1 \pmod{q}$, and thus $n$ must satisfy $2^n
\equiv 1 \pmod{q}$. So, let $k=(2^n-1)/q$.  Since $k<2^{N-1}$, $\varpi$ may be turned
into a partition in $\Omega(qU+1)$ by writing $k$ in base 2 then using the
identity $2^n = qk + 1$. Note that the resulting partition has \emph{exactly
  one} binary part (the part 1).  Therefore, these two operations provide us
with an injective mapping from $\Omega(qU-1)$ to $\Omega(qU+1)$, which completes
the proof of the second inequality in~\eqref{ineqW-1}.  \end{proof}

Accordingly, jumps of the function $\max{W}: x \mapsto \max_{U\leq x}
W(U)$ occur for certain $x \in q\n$.  
We conjecture that such jumps only occur for values $x \in q(2\n+1)$.  For
  example, if $(p,q)=(2,3)$ the first values of $\max{W}$ are: 2, 4, 5, 7, 10,
  13, 17, 19, 21, 22, 25. They occur at $U =$ 3, 9, 21, 27, 57, 81, 165,
  171, 243, 333, 345; all of the form $6k+3$.

Nevertheless, the best we are able to prove is that, if exceptions do exist,
  they are of the form $x = 2q^2U$.
  Indeed, first note that values $x \in 2q\n$ are all of the form
  $2q(qU+r) = 2q^2U + 2qr$ for some $0 \leq r \leq q-1$.
Then, by~\eqref{WpqU} and~\eqref{WpqU+r} we have
\begin{align}
  W(2q^2U+2qr)&=W(2qU+2r)+W(q^2U+qr-1), \\
  W(2q^2U+2qr-q)&=W(2qU+2r-1)+W(q^2U+qr-\frac{q+1}{2}).
\end{align}
Using Theorem~\ref{th_ineqW}, we know that $W(q^2U+qr-1) \leq
W(q^2U+qr-\frac{q+1}{2})$ for all $r$, and also $W(2qU+2r) \leq W(2qU+2r-1)$ for
$1 \leq r \leq q-1$ since $q$ is odd. Therefore, $2q^2U+2qr$ cannot be a jump
point of $\max{W}$ for the latter values of $r$.
Hence, the only possible exception may only occur for $r=0$, i.e., for
  numbers of the form $2q^2U$.
We have computed the values of $\max{W}(U)$ for $U \leq 10^6$ and
$q=3,5,7,9,11,13,15$ and have not found any such exception.

% Let us go a little further.  According to \eqref{WqU+r} we have
% $W(2qU+2r)=W(qU+r)$ if $1\leq r\leq (q-1)/2$, thus using~\eqref{WpqU} twice, we
% also have $W(2q^3U+2rq^2) = W(q^3U+rq^2)$ for these values of $r$. In conclusion,
% possible exceptions to the rule $x \in q(2\n+1)$ are of the form $x=2q^2U$ with
% $U\bmod q$ either being equal to 0 or lying between $(q+1)/2$ and $q-1$.

We have seen that $W(U)$ takes infinitely often the value 1 when $p=2$.  In
fact, we can be a little more precise, using a simple method illustrated below
in the case $q=3$.

\begin{prop}
  \label{W=1} 
  Let $(p,q)=(2,3)$. 
  We have $W(U) = 1$ if and only if, either $U \in \{0,1\}$ or $U = 2^{a}3 - 1$
  for some $a \in \n$. 
  Also, we have $W(U) = 2$ if and only if, either $U \in \{3,4,6,7\}$ or $U = 2^{a}9 -
  1$ or $U = 2^{a}15 - 1$ for some $a \in \n$.
\end{prop}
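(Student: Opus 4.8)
The plan is to characterize the arguments $U$ for which $W(U)$ equals a small fixed value by exploiting the recursive structure provided by Proposition on the case $p=2$, specifically relations \eqref{WqU}, \eqref{WqU+1}, and \eqref{WqU+r}, together with the already-established characterization of where $W$ takes value $1$. First I would set $(p,q)=(2,3)$ and recall that the reduction formulas let me express $W(3U)$, $W(3U+1)$, and $W(3U+r)$ for $r\in\{2\}$ in terms of values of $W$ at strictly smaller arguments. The key observation is that $W(U)=c$ for small $c$ forces strong constraints: since $W(3U)=W(U)+W(3U-1)$, having $W$ small at $3U$ forces both summands small, and each summand is again a value of $W$ at a smaller argument, so I can set up a descent.

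The main engine is an induction on $U$. For the $W(U)=1$ claim I would use relation \eqref{WqU-1red} (specialized to $q=3$, so $W(2^a3(2V+1)-1)=W(3V+1)$ and ultimately the fact, already stated, that the binary partition is the unique $(2,3)$-ary partition for numbers less than $q=3$) to show $W(2^a3-1)=1$; conversely, to show these are the \emph{only} solutions besides $U\in\{0,1\}$, I would argue that if $W(U)=1$ then the reduction applied to $U$ (according to $U \bmod 6$) must split $1$ as a sum of nonnegative $W$-values, forcing one summand to be $0$ and the other to be $1$; tracking which congruence classes permit a zero summand (using that $W$ vanishes only at non-integers here, since $p=2$) pins $U$ down to the stated form. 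For the $W(U)=2$ claim the same descent applies: $W(U)=2$ means the relevant reduction writes $2$ as a sum, so either one summand is $0$ and the other is a value-$2$ argument (reducing to the inductive hypothesis), or both summands equal $1$, in which case I invoke the just-proved $W=1$ characterization for both to solve for $U$, yielding the families $2^a9-1$ and $2^a15-1$ and the sporadic base cases $\{3,4,6,7\}$.

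Concretely, the descent is organized by the value of $U \bmod 6$. For each residue I would write down the corresponding instance of \eqref{WqU}, \eqref{WqU+1}, or \eqref{WqU+r} (the Maple procedure in the earlier section encodes exactly these six cases and serves as a checklist), determine when the right-hand side can equal the target value $1$ or $2$, and read off the admissible $U$. For $W=2$ the subtle residues are those where the formula is a genuine sum of two terms, since then I must enumerate the ways $2=1+1$ or $2=2+0$ can occur and translate each back through the earlier $W=1$ result; the residues where the formula is a single term $W(\lfloor (3U+r)/2\rfloor)$ reduce immediately to a smaller instance of the same problem.

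The hard part will be the bookkeeping in the reverse direction: showing that no $U$ outside the listed families can have $W(U)\in\{1,2\}$. This requires verifying that when a reduction formula is a sum of two terms, I have correctly identified \emph{all} ways the value $1$ or $2$ can arise and that each forces $U$ into one of the named arithmetic progressions, with the $2^a$-multiplier arising precisely from iterating the ``$W(2V)=W(V)$ at the even step'' behavior encoded in \eqref{WqU+r} and the even case of \eqref{WqU+1}. I would need to confirm the small base cases $\{0,1\}$ and $\{3,4,6,7\}$ by direct computation to anchor the induction, and to check that the families $2^a3-1$, $2^a9-1$, $2^a15-1$ are closed under the descent so that the induction is self-consistent. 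I expect no conceptual difficulty, only careful case analysis ensuring completeness.
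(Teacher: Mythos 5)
Your plan is correct and follows the paper's own proof essentially step for step: the paper likewise uses the $p=2$ reductions \eqref{WqU} and \eqref{WqU+1} to force $W(U)\geq 2$ on the residues $0,1 \pmod 3$ away from the base cases $\{0,1\}$, handles $U\equiv -1\pmod 3$ by the iterated single-term relation \eqref{WqU-1red} (which is exactly your ``$2^a$-multiplier from the even steps''), and then solves $W(U)=2$ by enumerating the $1+1$ splits through the just-proved $W=1$ characterization, yielding $\{3,4,6,7\}$ and the families $2^a9-1$, $2^a15-1$. The only cosmetic differences are that you organize the descent modulo $6$ where the paper works modulo $3$, and that your ``$2=2+0$'' splits are vacuous since for $p=2$ every nonnegative integer has a binary partition and hence $W$ never vanishes there.
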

\begin{proof} According to~\eqref{WqU} and~\eqref{WqU+1}, if $U \not\equiv -1 \pmod{3}$ then
$W(U) \geq 2$ unless $U$ is either 0 or 1. If $U \equiv -1 \pmod{3}$ then
by~\eqref{WqU-1red}, we have $W(U) = W(2^a3(2V+1)-1) = W(3V+1) = 1$ iff $V=0$.
The same reasoning shows that solving $W(U)=n$ for $U$ only requires solving it
for $U \equiv 0,1 \pmod{3}$. For $n=2$, then from~\eqref{WqU} and since
$W(0)=1$ we have $W(3U)=2$ iff $W(U)=W(3U-1)=1$, that is, either $U=1$ or $U$
and $3U-1$ are both of the form $2^{a}3-1$. The latter condition implies $U=2$,
so that $3U\in\{3,6\}$.  Next, from~\eqref{WqU+1}, we have $W(3U+1)=2$ iff
$W(U)=W(3\lfloor\frac{U+1}2\rfloor-1)=1$, that is, either $U\in\{1,2\}$ or $U$
and $3\lfloor\frac{U+1}2\rfloor-1$ are both of the form $2^{a}3-1$ with
$a>0$. The latter condition is impossible, thus $3U+1\in\{4,7\}$. Finally,
replacing $U \in \{1,2\}$ in~\eqref{WqU-1red}, leads to the numbers of the form
$2^a9-1$ and $2^a15-1$.  \end{proof}

Our numerical experiments suggest that, more generally, all values in $\n$ are taken
infinitely many times by $W$, but here again we did not succeed in either proving or
disproving this.
 
% -------------------------------------------------------------------------------

\section{Asymptotics}

As noticed previously, it seems rather difficult to give an estimate for the
  asymptotical behavior of $\max{W}$. Nevertheless, it is possible to precisely
describe the asymptotical behavior of the \emph{average} of $W$. In this
  regard, our results are similar to those of~\cite{ErdLox79a}, except the fact
  that we obtain a more explicit constant in
Theorem~\ref{equivS} below.

\begin{lemm}\label{lemrelsWx} 
Let $S(x) = \sum_{1\leq U\leq\lfloor x\rfloor}W(U)$. Then, for all $x \in
\r_{+}$,
we have
$$
S(x)=2 \left( S\left(\frac{x}{p}\right) + S\left(\frac{x}{q}\right) -
  S\left(\frac{x}{pq}\right) \right) + 1 - W^*\left(\left\lfloor x \right\rfloor
\right).
$$
\end{lemm}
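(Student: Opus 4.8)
The goal is to establish a self-referential identity for $S(x) = \sum_{1\le U \le \lfloor x\rfloor} W(U)$. The natural approach is to sum the two simple identities from the start of Section~\ref{sec:simple_id}, namely $W(U) = W^*(U) + W^*(U-1)$ and $W^*(U) = W(U/p) + W(U/q) - W(U/pq)$, over the relevant range of $U$, and to carefully track how summation interacts with the floor function.

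My plan is to begin from the first identity and sum it. Writing $S(x) = \sum_{1 \le U \le \lfloor x\rfloor} \big(W^*(U) + W^*(U-1)\big)$, I would reindex the second piece to get $\sum_{0 \le U \le \lfloor x\rfloor - 1} W^*(U)$. Since the conventions give $\Omega(0) = \{()\}$ so that $W(0)=1$ and hence (checking against $W(U)=W^*(U)+W^*(U-1)$) the boundary values $W^*(0)$ behave correctly, the two shifted sums combine into $2\sum_{1 \le U \le \lfloor x\rfloor} W^*(U)$ minus a correction coming from the mismatched endpoints, which is precisely where the terms $+1$ and $-W^*(\lfloor x\rfloor)$ will originate. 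I would pin down these boundary terms explicitly using $W^*(0)$ and the value $W^*(\lfloor x\rfloor)$ that is dropped when shifting the index.

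Next I would insert the second identity into $\sum_{1\le U\le\lfloor x\rfloor} W^*(U)$, giving $\sum_U \big(W(U/p)+W(U/q)-W(U/pq)\big)$. Here the essential point is that $W(U/p)=0$ unless $p\mid U$, so only multiples of $p$ contribute; writing $U=pV$, the sum $\sum_{1\le pV\le\lfloor x\rfloor} W(V)$ is exactly $S(x/p)$, since $pV \le \lfloor x\rfloor$ is equivalent to $V \le \lfloor x/p\rfloor$. The analogous reductions give $S(x/q)$ and $S(x/pq)$ for the other two terms. Collecting everything yields $S(x) = 2\big(S(x/p)+S(x/q)-S(x/pq)\big)$ plus the boundary correction $1 - W^*(\lfloor x\rfloor)$, which is the claimed relation.

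The main obstacle I anticipate is the bookkeeping of the boundary terms, rather than any conceptual difficulty: one must verify that the floor-function identities $\lfloor \lfloor x\rfloor / p\rfloor = \lfloor x/p\rfloor$ hold (they do, since $p$ is an integer) so that $S(x/p)$ really equals the reindexed sum, and one must correctly account for the endpoint values $W^*(0)$ and $W^*(\lfloor x\rfloor)$ produced by the index shift in the first identity. Care is also needed because $W^*(U)$ may be nonzero at $U=0$ under the stated conventions, so I would double-check the factor of $2$ and the constant $1$ against the small cases (say $x=1$) to make sure no off-by-one error has crept in.
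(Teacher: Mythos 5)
Your proposal is correct and follows essentially the same route as the paper's own proof: summing the telescoping identity $W(U)=W^*(U)+W^*(U-1)$ to produce $2\sum W^*(U) + W^*(0) - W^*(\lfloor x\rfloor)$, then substituting $W^*(U)=W(U/p)+W(U/q)-W(U/pq)$ and using the vanishing of $W$ at non-integers together with $\lfloor\lfloor x\rfloor/n\rfloor=\lfloor x/n\rfloor$ for $n\in\n$ to recover $S(x/p)$, $S(x/q)$, $S(x/pq)$. Your boundary bookkeeping, including the convention $W^*(0)=1$ and the check for $x<1$ (equivalently small $x$), matches the paper exactly.
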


\begin{proof} The relation holds for $x<1$ since both members are 0, with the usual
conventions that $W^*(0)=1$ and that a sum vanishes if the lower index exceeds
the upper one. Next assume $x\geq 1$. We have by Lemma~\ref{lemomega}
\begin{equation}
  \sum_{U=1}^{\lfloor x\rfloor}W(U) =\\
  \sum_{U=1}^{\lfloor x\rfloor}\left(W^*(U)+W^*(U-1)\right)
  = W^*(0)-W^*(\lfloor x\rfloor)+2\sum_{U=1}^{\lfloor x\rfloor}W^*(U)
  \end{equation}
  and
  \begin{equation}
    \sum_{U=1}^{\lfloor x\rfloor}W^*(U) = \sum_{U=1}^{\lfloor
      x\rfloor} \left(W(U/p)+W(U/q)-W(U/pq)\right),
  \end{equation}
from which the relation immediately follows since $\lfloor\lfloor
x\rfloor/n\rfloor=\lfloor x/n\rfloor$ if $n\in\n$.  \end{proof}

Accordingly, if there exists $\alpha$ such that $S(x) = O(x^\alpha)$ then $\alpha$ must satisfy 
\begin{align}\label{defalpha}
1/p^\alpha+1/q^\alpha-1/(pq)^\alpha=1/2. 
\end{align}
It is not difficult to check that \eqref{defalpha} has a unique positive
solution, which moreover satisfies
\begin{equation}
  \label{alpha1}
  \begin{cases}
    \alpha>1 & \text{if } \min(p,q)=2, \\
    \alpha =1 & \text{if } (p,q)=(3,4), \\
    \alpha<1 & \text{otherwise.}
  \end{cases}
\end{equation}
As a first consequence of Theorem~\ref{equivS} below and \eqref{alpha1}, the average value of $W$
goes to infinity if $p=2$, whereas it goes to 0 if $\min(p,q)>2$ except for (3,4) where it goes to a
constant in the order of 1.

Besides, $\alpha$ is greater than $\beta$ defined in Proposition~\ref{majW}.
Indeed, $(pq)^{-\beta}<1/4$ since $p^{-\beta}+q^{-\beta}=1$, thus
$$
1/p^\beta+1/q^ \beta-1/(pq)^ \beta >3/4. 
$$
Equation \eqref{defalpha} also reads 
$$
(1-p^{-\alpha})^{-1}(1-q^{-\alpha})^{-1}=2,
$$
which is reminiscent of the equation $\zeta(\rho)=2$ mentioned in the
introduction. Nevertheless, we do not try to adapt the arguments used in the
quoted paper. In order to cope with the recursive expression in
Lemma~\ref{lemrelsWx}, we rather exploit a method introduced ten years after by
Erd\"os \emph{et al.}~\cite{ErdHilOdlPudRez87}. Although the latter recursion
does not fit the required form because of its negative coefficient, it may be
rectified in our special case.

\begin{theo}\label{equivS}
  Let $\alpha$ be the positive solution of equation \eqref{defalpha}. We have,
  for large $x$,
  $$ S(x)= C_{p,q}x^\alpha(1+o(1)), $$
  where $C_{p,q}$ is a computable positive constant satisfying
  $$ C_{p,q} < 2\left(\frac{\ln p^\alpha}{p^\alpha -1}+\frac{\ln q^\alpha}{q^\alpha
    -1}\right)^{-1}. $$
\end{theo}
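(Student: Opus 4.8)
The plan is to recast the recursion of Lemma~\ref{lemrelsWx} as a genuine renewal equation with \emph{positive} coefficients, and then run the renewal-theoretic argument of Erd\"os \emph{et al.} Write $D_m$ for the dilation $(D_mf)(x)=f(x/m)$ and set $g(x)=1-W^*(\lfloor x\rfloor)$, so that Lemma~\ref{lemrelsWx} reads $S=2(D_p+D_q-D_{pq})S+g$, valid for all $x\in\r_+$ (both sides vanish for $x<1$). Since $D_p+D_q-D_{pq}=I-(I-D_p)(I-D_q)$, this is exactly $2(I-D_p)(I-D_q)S=S+g$. I would rectify the offending negative coefficient by applying the \emph{positive} inverse operators $(I-D_p)^{-1}=\sum_{a\ge0}D_{p^a}$ and $(I-D_q)^{-1}=\sum_{b\ge0}D_{q^b}$ (for each fixed $x$ only finitely many summands act nontrivially), which gives $2S=\sum_{a,b\ge0}D_{p^aq^b}(S+g)$, that is
\[ S(x)=\sum_{\substack{(a,b)\neq(0,0)\\ p^aq^b\le x}}S\!\left(\frac{x}{p^aq^b}\right)+G(x),\qquad G(x)=\sum_{a,b\ge0}g\!\left(\frac{x}{p^aq^b}\right). \]
The characteristic sum of this recursion is $\sum_{(a,b)\neq(0,0)}(p^aq^b)^{-\alpha}=\frac{1}{(1-p^{-\alpha})(1-q^{-\alpha})}-1=2-1=1$, where I use that \eqref{defalpha} is equivalent to $(1-p^{-\alpha})(1-q^{-\alpha})=1/2$; thus the rectified recursion is governed by the same exponent $\alpha$.

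Next I would check that the forcing is of strictly lower order. By Proposition~\ref{majW} and $W^*\le W$ one has $|g(y)|\le y^\beta$ for $y\ge1$, and since $\beta<\alpha$ (proved just after \eqref{alpha1}) the convergent series $\sum_{a,b}(p^aq^b)^{-\beta}$ yields $G(x)=O(x^\beta)$. Passing to the logarithmic scale with $f(t)=e^{-\alpha t}S(e^t)$ and $\phi(t)=e^{-\alpha t}G(e^t)$ turns the recursion into the renewal equation $f(t)=\int_0^\infty f(t-s)\,d\nu(s)+\phi(t)$, where $\nu=\sum_{(a,b)\neq(0,0)}(p^aq^b)^{-\alpha}\,\delta_{a\log p+b\log q}$ is a probability measure (total mass $1$ by the characteristic sum) of mean
\[ \mu=\sum_{(a,b)\neq(0,0)}(p^aq^b)^{-\alpha}(a\log p+b\log q)=2\left(\frac{\ln p}{p^\alpha-1}+\frac{\ln q}{q^\alpha-1}\right), \]
obtained from $\sum_a a\,u^a=u/(1-u)^2$ with $u=p^{-\alpha}$, $v=q^{-\alpha}$ and $(1-u)(1-v)=1/2$. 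The atoms $a\log p+b\log q$ generate a dense subgroup of $\r$, because $\log p/\log q$ is irrational (otherwise $p^s=q^r$, impossible for coprime $p,q>1$), so $\nu$ is non-lattice; moreover $\phi$ is directly Riemann integrable, being bounded, piecewise constant and $O(e^{(\beta-\alpha)t})$.

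The key renewal theorem then gives $f(t)\to C_{p,q}:=\mu^{-1}\int_0^\infty\phi(s)\,ds$, i.e.\ $S(x)=C_{p,q}x^\alpha(1+o(1))$. Evaluating the integral (substitute $x=e^s$, then $y=x/(p^aq^b)$ in each summand of $G$, using $g\equiv0$ on $(0,1)$ and $\sum_{a,b}(p^aq^b)^{-\alpha}=2$) yields the explicit value
\[ C_{p,q}=\frac{2}{\mu}\int_1^\infty g(y)\,y^{-\alpha-1}\,dy=\frac{2}{\mu}\left(\frac1\alpha-\int_1^\infty W^*(\lfloor y\rfloor)\,y^{-\alpha-1}\,dy\right). \]
As $W^*\ge0$, the last integral is nonnegative, so $C_{p,q}\le \frac{2}{\alpha\mu}<\frac{4}{\alpha\mu}$; and since $\ln p^\alpha=\alpha\ln p$ one has $\frac{4}{\alpha\mu}=2\bigl(\frac{\ln p^\alpha}{p^\alpha-1}+\frac{\ln q^\alpha}{q^\alpha-1}\bigr)^{-1}$, which is precisely the claimed bound. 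Positivity of $C_{p,q}$ I would get from the strict positivity of $\phi$ near $s=0$ (for $1\le x<2$ only the $(0,0)$ term survives, so $G(x)=g(x)=1-W^*(1)=1$) together with the steady accumulation of the non-lattice renewal measure, which forces $S$ to have growth order exactly $x^\alpha$.

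The hard part will be making the renewal step fully rigorous rather than formal: one must verify the non-lattice hypothesis and the direct Riemann integrability of $\phi$, and, more delicately, control the boundary effects concealed in the floors $\lfloor x/(p^aq^b)\rfloor$ and in the truncation $p^aq^b\le x$, so that the idealized renewal equation holds up to error terms that provably do not perturb the limit. The second subtle point is establishing the \emph{strict} positivity of $C_{p,q}$ in full generality, since the renewal theorem only delivers $C_{p,q}\ge0$ for free.
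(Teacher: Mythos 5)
Your proof is correct, and it runs on the same engine as the paper's: both recast Lemma~\ref{lemrelsWx} as a renewal equation on the logarithmic scale and conclude with the Key Renewal Theorem, following the method of~\cite{ErdHilOdlPudRez87}, with identical ingredients for direct Riemann integrability (the bound $W^*(U)\le W(U)\le U^\beta$ from Proposition~\ref{majW} with $\beta<\alpha$, a.e.\ continuity from the piecewise-constant structure, domination by a decreasing integrable exponential). Where you genuinely diverge is in the rectification of the negative coefficient. The paper expands only in $p$: it iterates the relation along the points $x/p^i$ and telescopes, arriving at an asymmetric probability measure with masses $2q^{-\alpha}$ at $\log_p q$ and $p^{-\alpha i}$ at each integer $i\ge 1$; the price is the lopsided mean $\frac{2\log_p q}{q^\alpha}+\frac{p^\alpha}{(p^\alpha-1)^2}$ and a closing symmetrization step (showing $d(p,q)=d(q,p)$ via \eqref{defalpha}) to reach the stated form of the bound. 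Your double inversion $(I-D_p)^{-1}(I-D_q)^{-1}$ instead produces the fully symmetric measure with mass $(p^aq^b)^{-\alpha}$ at $a\ln p+b\ln q$ for $(a,b)\neq(0,0)$, whose total mass $1$ and mean $\mu=2\left(\frac{\ln p}{p^\alpha-1}+\frac{\ln q}{q^\alpha-1}\right)$ drop straight out of $(1-p^{-\alpha})(1-q^{-\alpha})=\frac{1}{2}$. This buys three things: the non-lattice hypothesis is checked explicitly (the paper leaves it implicit in the irrationality of $\log_p q$); you obtain the exact closed form $C_{p,q}=\frac{2}{\mu}\int_1^\infty\bigl(1-W^*(\lfloor y\rfloor)\bigr)\,y^{-\alpha-1}\,dy$, where the paper only estimates its forcing integral from above; and your resulting bound $C_{p,q}\le\left(\frac{\ln p^\alpha}{p^\alpha-1}+\frac{\ln q^\alpha}{q^\alpha-1}\right)^{-1}$ is a factor $2$ sharper than the theorem's statement (as, in fact, is the paper's own intermediate bound $\frac{1}{\alpha\, d(p,q)}$ once $d(p,q)$ is simplified using \eqref{defalpha}; the stated factor $2$ is slack in both arguments, so your inequality implies the claim with room to spare).

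Two remarks on the caveats you raise. The worry about boundary effects from the floors and the truncation $p^aq^b\le x$ is unfounded: Lemma~\ref{lemrelsWx} holds for every real $x$ (both sides vanish on $(0,1)$ thanks to the convention $W^*(0)=1$), so your rectified identity and the resulting renewal equation are exact, not idealized, and no error terms need controlling. By contrast, your concern about strict positivity of $C_{p,q}$ is real, since $1-W^*(\lfloor y\rfloor)$ is negative for most $y$ and the Key Renewal Theorem delivers only the existence of the limit, whatever its sign; but here you are at parity with the paper, whose proof likewise establishes only the limit and the upper bound and asserts positivity without argument, so this is not a gap of your proposal relative to the published proof.
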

\begin{proof} 
  Let $f(x)=x^{-\alpha}S(x)$ for positive $x$. 
  By Lemma~\ref{lemrelsWx}, we have for $x>0$ and $i\in\n$
  \begin{multline}
    \label{relfi2}
    \frac{1}{p^{\alpha i}} f\left(\frac{x}{p^i}\right) = 
    \frac{2}{p^{\alpha(i+1)}} f\left(\frac{x}{p^{i+1}}\right) \\
    +\frac{2}{p^{\alpha i}q^\alpha} \left( f\left(\frac{x}{p^iq}\right) -
      \frac{1}{p^\alpha} f\left(\frac{x}{p^{i+1}q}\right) \right) \\
    +\frac{1}{x^\alpha} \left(1-W^*\left(\left\lfloor \frac{x}{p^i} \right\rfloor\right)\right).
  \end{multline}
  Since all terms vanish as soon as $p^i>x$, summing both sides for $i$ results, for all $x>0$, in 
  \begin{equation}
    \label{relfx2}
    f(x) = \frac{2}{q^\alpha} f\left(\frac{x}{q}\right) 
    + \sum_{i\geq 1} \frac{1}{p^{\alpha i}} f\left(\frac{x}{p^i}\right)
    +\frac{1}{x^\alpha} \sum_{i=0}^{\left\lfloor\log_p x\right\rfloor}
    \left(1-W^*\left(\left\lfloor \frac{x}{p^i} \right\rfloor\right)\right). 
  \end{equation}

  Now, let $\mu$ be the discrete measure with masses $2\,q^{-\alpha}$ at point
  $c=\log_p q$, and $p^{-\alpha i}$ at point $i$ for each $i\in\n^*$. Using
  \eqref{defalpha}, it is easy to check that $\mu$ is a probability measure,
  with expectation
  \begin{equation}
    \label{Emu2}
    E(\mu) = \frac{2}{q^\alpha}c + \frac{p^\alpha}{(p^\alpha-1)^2}.
  \end{equation} 
  Let $g(x)=f(p^x)$. Since $f$ vanishes on $(0,1)$, $g(x)$ vanishes for $x<0$.
  Therefore, and according to \eqref{relfx2}, $g$ satisfies on $\r_+$ the
  renewal equation
  \begin{align}
    g(x) &= \frac{2}{q^\alpha}g(x-c)
    + \sum_{i\geq 1}\frac{1}{p^{\alpha i}} g(x-i)
    + \sum_{i=0}^{\left\lfloor x \right\rfloor} \frac{1}{p^{\alpha x}}
    \left(1-W^*\left(\left\lfloor p^{x-i}\right
        \rfloor\right)\right) \label{renewal1} \\
    &= \int_0^x g(x-t)\mu(dt)+h(x), \label{renewal2} \\
    &\hspace{1in} \text{where } h(x)=\frac{1}{p^{\alpha x}} \sum_{i\leq
      x} \left(1-W^*\left(\left\lfloor p^{x-i}\right\rfloor\right)\right). \notag
\end{align}

In order to use the Key Renewal Theorem, we now check that $h$ is directly
Riemann-integrable (d.R.i.), that is, both $h^+=\max(h,0)$ and $h^-=\max(-h,0)$
are d.R.i.\ (see, e.g.,~\cite[p.154]{Asmussen03}). First notice that
$x\mapsto\sum_{i\leq x}W^*(\lfloor p^{x-i}\rfloor)$ is a constant on each
interval $[\log_pn,\log_p(n+1))$, $n\in\n^*$.  Thus $h$ , and $h^+$ and $h^-$ as
well since $p^{-\alpha x}$ decreases with $x$, are continuous a.e.\ with respect
to Lebesgue measure.  Next, since $W^*(U)\leq U^\beta$ with $\alpha>\beta>0$,
\begin{equation}
  h^-(x)\leq\sum_{i\leq x}\frac{W^*\left(\left\lfloor
        p^{x-i}\right\rfloor\right)}{p^{\alpha x}} \leq
  \sum_{i\leq x}\frac{p^{\beta(x-i)}}{p^{\alpha x}}
  = \frac{p^{\beta x}}{p^{\alpha x}}\sum_{i=0}^{|x|}p^{-\beta i}
  \leq \frac{p^{(\beta-\alpha) x}}{1-p^{-\beta}}. 
\end{equation}
Since $x\mapsto p^{(\beta-\alpha) x}(1-p^{-\beta})^{-1}$ is Lebesgue integrable
and decreasing, it is d.R.i., thus $h^-$ is d.R.i.\ too.  Finally, $h^+$ is also
d.R.i.\ since it is dominated by $x\mapsto(|x|+1)p^{-\alpha x}$, which is itself
d.R.i.\ since it is Lebesgue integrable and decreases for $x>\ln p^\alpha-1$.

According to the Key Renewal Theorem, we thus have 
$$
\lim_{x\to\infty} f(x) = \lim_{x\to\infty} g(x)
= \frac{1}{E(\mu)} \int_{\r_+} h(x)dx = C_{p,q}.
$$

Let us next evaluate the latter Riemann-integral.  
\begin{multline}
  \int_{\r_+}h(x)dx \leq
  \int_{\r_+}\left(\lfloor x\rfloor+1\right) \frac{dx}{p^{\alpha x}}\\
  = \sum_{n\geq 0}(n+1)\int_{n}^{n+1}\frac{dx}{p^{\alpha x}}
  = \frac1{(1-p^{-\alpha})\ln p^\alpha}. 
\end{multline}
Accordingly, and making use of \eqref{defalpha},
\begin{align}
  C_{p,q} &\leq \frac{p^\alpha}{(p^{\alpha}-1)\ln p^\alpha}
  \left(\frac{2\log_p q}{q^\alpha}+\frac{p^\alpha}{(p^\alpha-1)^2}\right)^{-1}\\
  &= \frac{1}{\alpha d(p,q)},\,\text{ where } d(p,q)=
  \frac{p^\alpha-2}{p^\alpha}\ln q+\frac{1}{p^\alpha-1}\ln p.
\end{align}

Notice that, by~\eqref{defalpha} again, $d(p,q)-d(q,p)=0$, so that $d(p,q)$ may
be replaced by the half-sum of $d(p,q)$ and $d(q,p)$, which yields the claimed
bound for $C_{p,q}$.  
\end{proof}

% -------------------------------------------------------------------------------

% \nocite{*}
% \bibliographystyle{amsplain}
% \bibliography{pqary_partitions}

\begin{thebibliography}{1}

\bibitem{Andrews84}
G.~E. Andrews, \emph{The theory of partitions}, Cambridge University Press,
  1984.

\bibitem{Asmussen03}
S.~Asmussen, \emph{Applied probability and queues}, 2nd ed., Springer, 2003.

\bibitem{DimImbMis08:mathcomp}
V.~Dimitrov, L.~Imbert, and P.~K. Mishra, \emph{The double-base number system
  and its application to elliptic curve cryptography}, Mathematics of
  Computation \textbf{77} (2008), no.~262, 1075--1104.

\bibitem{ErdHilOdlPudRez87}
P.~Erd{\"o}s, A.~Hildebrand, A.~Odlyzko, P.~Pudaite, and B.~Reznick, \emph{The
  asymptotic behavior of a family of sequences}, Pacific Journal of Mathematics
  \textbf{126} (1987), no.~2, 227--241.

\bibitem{ErdLox79a}
P.~Erd{\"o}s and J.~H. Loxton, \emph{Some problems in partitio numerorum},
  Journal of the Australian Mathematical Society, Series A \textbf{27} (1979),
  no.~3, 319--331.

\bibitem{ErdGra80}
P.~Erdös and R.~L. Graham, \emph{Old and new problems and results in
  combinatorial number theory}, L'Enseignement Mathématique \textbf{28} (1980),
  30--44, Geneva, Switzerland.

\bibitem{Vardi91}
I.~Vardi, \emph{The digits of $2^n$ in base three}, Computational Recreations
  in Mathematica, Addison-Wesley, Reading, MA, USA, 1991, pp.~20--25.

\end{thebibliography}

\providecommand{\bysame}{\leavevmode\hbox to3em{\hrulefill}\thinspace}
\providecommand{\MR}{\relax\ifhmode\unskip\space\fi MR }
% \MRhref is called by the amsart/book/proc definition of \MR.
\providecommand{\MRhref}[2]{%
  \href{http://www.ams.org/mathscinet-getitem?mr=#1}{#2}
}
\providecommand{\href}[2]{#2}

\end{document}